\journal{Sustainable Energy, Grids and Networks}
\newtheorem{defi}{Definition}
\newtheorem{thm}{Theorem}
\newtheorem{coro}{Corollary}
\newtheorem{propo}{Proposition}
\newtheorem{rema}{Remark}
\newtheorem{exmpl}{Example}
\newtheorem{sett}{Setting}
\newcommand{\unaryminus}{\scalebox{0.5}[1.0]{\( - \)}}
\newcommand{\nbus}{N}
\newcommand{\ngen}{N}
\newcommand{\numline}{N_l}
\newcommand{\nxi}{n_\xi}
\newcommand{\basisfun}{\psi}
\newcommand{\basisfunVector}{\bs{\basisfun}}
\newcommand{\varsqrt}[1]{S_{#1}}
\newcommand{\adjustlength}{0mm}
\newcommand{\pdf}{\textsc{PDF}\xspace}
\newcommand{\pdfs}{\textsc{PDF}s\xspace}
\newcommand{\Ltwospacern}[1]{\mathcal{L}^2 (\Omega,\mathbb{R}^{#1})}
\newcommand{\cc}{\textsc{CC}}
\newcommand{\opf}{\textsc{OPF}\xspace}
\newcommand{\ccopf}{\cc-\opf}
\newcommand{\hopf}{h\opf}
\newcommand{\dc}{\textsc{DC}\xspace}
\newcommand{\ac}{\textsc{AC}\xspace}
\newcommand{\dcopf}{\textsc{DC}-\textsc{OPF}\xspace}
\newcommand{\pce}{\textsc{PCE}\xspace}
\newcommand{\pces}{{\textsc{PCE}}s\xspace}
\newcommand{\socp}{\textsc{socp}\xspace}
\newcommand{\normaldist}{\mathsf{N}}
\newcommand{\betadist}{\mathsf{B}}
\newcommand{\std}[2]{\sigma_{#2}^{\text{#1}}}
\newcommand{\bs}[1]{\boldsymbol{#1}}
\newcommand{\rv}[1]{\mathsf{#1}}
\newcommand{\ev}[1]{\operatorname{E}\left[#1\right]}
\newcommand{\var}[1]{\operatorname{Var}\left[#1\right]}
\newcommand{\pdffun}{f}
\newcommand{\pvar}{u}
\newcommand{\pvarup}{\expandafter\uppercase\expandafter{\pvar}}
\newcommand{\pfix}{d}
\newcommand{\pfixup}{\expandafter\uppercase\expandafter{\pfix}}
\newcommand{\agc}{\textsc{AGC}\xspace}
\newcommand{\hentry}{h}
\newlist{mylist}{enumerate}{1}
\setlist[mylist]{label=\textsc{p}\oldstylenums{\arabic*}}
\newlist{mylistlist}{enumerate}{1}
\setlist[mylistlist]{label=\textsc{q}\oldstylenums{\arabic*}}
\definecolor{till}{rgb}{.0,.0,.0}
\definecolor{timm}{rgb}{.0,.0,.0} 
\definecolor{veit}{rgb}{.8,.1,.1} 
\definecolor{added}{rgb}{1,.0,.0}
\definecolor{revised}{rgb}{.2,.8,.1}
\def\till{\textcolor{till}}
\newlist{mypcelist}{enumerate}{1}
\setlist[mypcelist]{label=\textsc{p}\oldstylenums{\arabic*}}
\newlist{myinglist}{enumerate}{1}
\setlist[myinglist]{label=\textsc{i}\oldstylenums{\arabic*}}
\newlist{mysteplist}{enumerate}{1}
\setlist[mysteplist]{label=Step\xspace\oldstylenums{\arabic*}, leftmargin=35pt}
\begin{document}

\begin{frontmatter}



\title{A Generalized Framework for Chance-constrained Optimal Power Flow}


\author{Tillmann M{\"u}hlpfordt, Timm Faulwasser, Veit Hagenmeyer}

\address{Institute for Automation and Applied Informatics, Karlsruhe Institute of Technology, Karlsruhe, Germany,\\ $\{$tillmann.muehlpfordt, timm.faulwasser, veit.hagenmeyer$\}$@kit.edu.}

\begin{abstract}
Deregulated energy markets, demand forecasting, and the continuously increasing share of renewable energy sources call---among others---for a structured consideration of uncertainties in optimal power flow problems.
The main challenge is to guarantee power balance while maintaining economic and secure operation.
In the presence of Gaussian uncertainties affine feedback policies are known to be viable options for this task.
The present paper advocates a general framework for chance-constrained \opf problems in terms of continuous random variables. It is shown that,  irrespective of the type of distribution, the random-variable minimizers 
lead to affine feedback policies.
Introducing a three-step methodology that exploits polynomial chaos expansion, the present paper provides a constructive approach to chance-constrained optimal power flow problems that does not assume a specific distribution, e.g. Gaussian, for the uncertainties.
We illustrate our findings by means of a tutorial example and a 300-bus test case.
\end{abstract}

\begin{keyword}
chance-constrained optimal power flow, uncertainties, affine policies, polynomial chaos
\end{keyword}

\end{frontmatter}


\setlength{\nomitemsep}{-\parsep}

\printnomenclature[1.5cm]
\nomenclature[1a]{$\nbus$}{Number of buses}
\nomenclature[1b]{$\mathcal{\nbus}$}{Set of bus indices}
\nomenclature[1c]{$\numline$}{Number of lines}
\nomenclature[1d]{$\mathcal{N}_l$}{Set of line indices}
\nomenclature[1e]{$\pvar$}{Controllable active power}
\nomenclature[1fa]{$\pfix$}{Uncontrollable active power}
\nomenclature[1fb]{$p_l$}{Line power flow}
\nomenclature[1g]{$\alpha$}{\agc coefficients}
\nomenclature[1h]{$J$}{Cost function}
\nomenclature[1i]{$\underline{x}$, $\overline{x}$}{Lower bound, upper bound of $x$}
\nomenclature[1j]{$\bs{1}_n$}{$n$-dimensional column vector of ones}
\nomenclature[1j]{$\phi$}{Power transfer distribution factor matrix}
\nomenclature[3a]{$\Omega$}{Set of outcomes}
\nomenclature[3b]{$\mathbb{P}$}{Probability measure}
\nomenclature[3d]{$\mathcal{L}^2(\Omega, \mathbb{R})$}{Hilbert space of second-order random variables w.r.t. probability measure $\mathbb{P}$}
\nomenclature[3e]{$\xi$}{Stochastic germ}
\nomenclature[3f]{$\basisfun_\ell$}{$\ell$\textsuperscript{th} basis function}
\nomenclature[3fa]{$\basisfunVector$}{Vectorized basis $\basisfunVector = [ \basisfun_1, \hdots, \basisfun_L ]^\top$}
\nomenclature[3g]{$\langle \cdot, \cdot \rangle$}{Scalar product}
\nomenclature[3h]{$L+1$}{\pce dimension}
\nomenclature[4a]{$\rv{x}$}{Random variable}
\nomenclature[4b]{$\tilde{x} = \rv{x}(\tilde{\xi})$}{Realization of random variable $\rv{x}$}
\nomenclature[4c]{$x_{\ell}$}{$\ell$\textsuperscript{th} vector of \pce coefficients of $\rv{x}$}
\nomenclature[4d]{$X$}{Matrix of \pce coefficients of $\rv{x}$ of degree greater zero}
\nomenclature[4f]{$\ev{\rv{x}}$}{Expected value of $\rv{x}$}
\nomenclature[4g]{$\var{\rv{x}}$}{Variance of $\rv{x}$}
\newpage
\section{Introduction}
\label{sec:Introduction}
The continuing increase in electricity generation from renewable energy sources and liberalized energy markets pose challenges to the operation of power systems \cite{Liu12}; i.e., the importance of uncertainties is on the rise.
Uncertainty leads to and/or increases fluctuating reserve capacities, and varying line power flows across the network, among others.
The structured consideration of uncertainties is thus paramount in order to ensure the economic and secure operation of power systems in the presence of fluctuating feed-ins and/or uncertain demands.

Optimal power flow (\opf) is a standard tool for operational planning and/or system analysis of power systems.
The objective is to minimize operational costs whilst respecting generation limits, line flow limits, and the power flow equations.
Assuming no uncertainties are present the solution approaches to this optimization problem are numerous, see for example references listed in \cite{Frank2012}.
In the presence of stochastic uncertainties the \opf problem must be reformulated, ensuring
\begin{enumerate}[label=(\roman*), parsep=0pt, topsep=2pt]
	\item that technical limitations (inequality constraints) are met with a specified probability, and \label{item:TechnicalLimitations}
	\item that the power flow equations (equality constraints) are satisfied for all possible realizations of the uncertainties, i.e. power system stability is achieved. \label{item:PowerFlowEquations}
\end{enumerate}
Regarding issue \ref{item:TechnicalLimitations}, chance-constrained optimal power flow (\ccopf) is a formulation that allows inequality constraint violations with the probability of constraint violation as a user-specified parameter.
Individual chance constraints admit deterministic, distributionally robust convex reformulations of the \ccopf problem \cite{Roald15b}.
For Gaussian uncertainties these reformulations are exact \cite{Bienstock14,Roald13,Roald15b}.
Alternatively, it is possible to solve the individually chance-constrained optimization problem by means of multi-dimensional integration \cite{Zhang11,Zhang13}.
\till{Scenario-based methods---often applied to multi-stage problems \cite{Vrakopoulou12,Vrakopoulou13b,Vrakopoulou2017a}---are an alternative to chance-constrained approaches; the chance constraints are replaced by sufficiently many deterministic constraints leading to large but purely deterministic  problems \cite{Campi04}.}

Regarding issue \ref{item:PowerFlowEquations}, the power flow equations are physical constraints that hold despite fluctuations.
This requires feedback control.
In particular, automatic generation control (\agc) balances mismatches between load and generation, given sufficient reserves can be activated.
Affine policies have been shown to yield power references that satisfy \dc power flow in the presence of (multivariate) Gaussian uncertainties \cite{Bienstock14,Roald13,Roald15b,Roald16a} (assuming ideal primary control).
Existing approaches \cite{Roald15b,Bienstock14,Roald13,Roald16a} to single-stage \ccopf under \dc power flow and Gaussian uncertainties directly formulate the \ccopf problem in terms of the parameters of the affine feedback, leading to finite-dimensional second-order cone programs.
However, the relevance and advantages of non-Gaussian uncertainties for modeling load patterns and renewables have been emphasized in the literature \cite{Atwa10a, Carpaneto08a, Soubdhan09a}.
Certain non-Gaussian distributions (such as Beta distributions) allow compact supports and skewed probability density functions, which hence overcome modeling shortcomings of purely Gaussian settings.
For example, to model a load via a Gaussian random variable always bears a non-zero probability for the load acting as a producer.
Arguably, this probability may be small, but an uncertainty description that rules out this possibility by design is physically consistent and desirable.

We remark that how to address the reformulation of inequality constraints in the problem formulation, i.e. issue~\ref{item:TechnicalLimitations}, is a user-specific choice.
As such, this choice resembles a trade-off between computational tractability and modeling accuracy.
In contrast, the validity of the power flow equations, i.e. issue~\ref{item:PowerFlowEquations} imposes a physical equality constraint that has to be accounted for in the problem formulation.
The present paper proposes a general framework for chance-constrained \opf that combines modeling uncertainties in terms of continuous random variables of finite variance, \emph{and} a rigorous mathematical consideration of the power flow equations as equality constraints of the \opf problem. 
It is shown that a formulation of the \ccopf problem in terms of random variables naturally leads to engineering-motivated affine policies.
Under the mild assumption that uncertainties are modeled as continuous random variables of finite variance with otherwise arbitrary probability distributions, our findings highlight that the optimal affine policies are indeed random-variable minimizers of an underlying \ccopf problem.
	
A consequence of the last item is that the proposed general framework to \ccopf embeds \emph{and} extends current approaches \cite{Bienstock14,Roald13,Roald15b,Roald16a} which consider purely Gaussian settings.

The key step is to formulate the \ccopf problem rigorously with random variables as decision variables.
This unveils the infinite-dimensional nature of \ccopf.
A three-step methodology concisely describes the proposed approach to \ccopf: formulation, parameterization, optimization.
This results in optimal affine policies that satisfy power balance despite uncertainties.
The corresponding optimization problem scales well in terms of the number of uncertainties.
For common individual chance-constraint reformulations it leads to a second-order cone program.
Polynomial chaos expansion (\pce) is employed to represent all occurring random variables by finitely many deterministic coefficients.

While \pce dates back to the late 30s \cite{Wiener38}, it has been applied to power systems only recently, for example to design a power converter \cite{monti04a}, to design observers in the presence of uncertainties \cite{smith07}, and to solve stochastic power flow \cite{Muehlpfordt16b,Muehlpfordt17a,Tang16,Ni17,Appino17}.
The applicability of \pce to \opf problems under uncertainty has been demonstrated in \cite{Muehlpfordt16b,Muehlpfordt17a,Tang16,Ni17}.
The works \cite{Tang16,Ni17} focus on computational details when implementing \pce.
In contrast, \cite{Muehlpfordt16b,Muehlpfordt17a} mention that the power flow equations are always satisfied.
However, \cite{Muehlpfordt16b,Muehlpfordt17a} do not put \pce approaches to \opf in relation to other existing approaches, and do not show optimality of affine policies.
Instead, the present paper takes a different view: starting from existing approaches \cite{Bienstock14,Roald13,Roald15b,Roald16a} we show that \pce is a generalization; the more mathematical nature of \pce is thus related to the engineering practice of affine policies.

The present manuscript focuses on a framework for single-stage \opf problems under uncertainty, highlighting the importance of affine control policies rigorously irrespective of the kind of distribution of the uncertainty.
Affine policies have also been applied to multi-stage \opf under uncertainty \cite{Vrakopoulou12,Warrington13,Vrakopoulou2017a,Li2017}, where their use is motivated based on engineering intuition.
For multi-stage \opf problems the handling of the inequality constraints is similar to single-stage \opf: it comprises analytically reformulated chance constraints \cite{Li2017}, convex reformulations \cite{DallAnese2017}, and scenario-based approaches \cite{Stai2017}.

Summing up, the contributions of our work are as follows:
We provide a problem formulation of chance-constrained \opf in terms of random variables that is shown to contain existing approaches \cite{Bienstock14,Roald13,Roald15b,Roald16a}.
We further give a rigorous proof showing when affine policies are optimal.
Additionally, we highlight an important dichotomy: optimal policies of chance-constrained \opf correspond to optimal random variables.
Finally, we provide a tractable and scalable reformulation of the random-variable problem in terms of a second-order cone program by leveraging polynomial chaos expansions.
The combination of the contributions provide a tractable framework for chance-constrained \opf.

The remainder is organized as follows:
Section \ref{sec:Random-variable-opf} introduces the \ccopf problem in terms of random variables, and demonstrates the flexibility of the proposed formulation: existing approaches for Gaussian uncertainties can be obtained as special cases (Section~\ref{sec:GaussianUncertainties}).
The observations at the end of Section~\ref{sec:Random-variable-opf} lead to a three-step methodology to \ccopf, presented in Section~\ref{sec:ThreeSteps} in greater detail.
Section~\ref{sec:PCE} introduces polynomial chaos expansion as a mathematical tool that is required to tackle Section~\ref{sec:ThreeSteps}. 
The methodology developed in Section~\ref{sec:ThreeSteps} is demonstrated for a tutorial 3-bus example in Section~\ref{sec:Example_3Bus}, \till{and a 300-bus test case in Section~\ref{sec:Example_300Bus}}.

\section{Preliminaries and Problem Formulation}
\label{sec:Random-variable-opf}
Consider a connected $\nbus$-bus electrical \till{transmission} network in steady state that is composed of linear components, \till{for which the \dc power flow assumptions are valid (lossless lines, unit voltage magnitude constraints, small angle differences).}
The $\numline$ lines have indices $\mathcal{N}_l = \{1,\hdots, \numline\}$.
For simplified presentation each bus $i \in \mathcal{\nbus} = \{1, \hdots, \nbus\}$ is assumed to be connected to one generation unit, and one fixed but uncertain power demand/generation.
The net active power realization $p \in \mathbb{R}^{\nbus}$ is $p = \pvar + \pfix$,
where $\pvar \in \mathbb{R}^{\nbus}$ represents adjustable/controllable (generated) power, and $\pfix \in \mathbb{R}^{\nbus}$ resembles (uncontrollable) power demand in case of $\pfix_i < 0$ for bus $i \in \mathcal{\nbus}$, or (uncontrollable) renewable feed-in in case of $\pfix_j > 0$ for bus $j \in \mathcal{N}$.
The goal of (deterministic) \opf is to minimize generation costs $J(u)$ with $J: \mathbb{R}^{\ngen} \rightarrow \mathbb{R}$ such that the power flow equations are satisfied (equality constraints), and generation limits and line flow limits are satisfied (inequality constraints).
Under \dc power flow conditions the standard formulation for \opf reads
\begin{subequations} 
	\label{eq:DCOPF}
	\begin{align}
	\label{eq:DCOPF_costfunction}
	\underset{\pvar \in \mathbb{R}^{\nbus}}{\operatorname{min}}~  & J(\pvar)\\
	\mathrm{s.\,t.} ~ ~ \,
	\label{eq:DCOPF_powerflow}
	& \bs{1}_{\nbus}^\top (\pvar + \pfix) = 0,\\
	\label{eq:DCOPF_cons_gen}
	& \underline{\pvar} \leq \pvar \leq \overline{\pvar}, \\
	\label{eq:DCOPF_cons_line}
	& \underline{p}_l \leq p_l =  \phi \, (\pvar + \pfix) \leq \overline{p}_l,
	\end{align}
\end{subequations}
where \eqref{eq:DCOPF_powerflow} is the power balance with $\bs{1}_{\nbus} = [1 \hdots  1]^\top \in  \mathbb{R}^{\nbus}$.
The generation limits and line limits are $\underline{\pvar}, \overline{\pvar} \in \mathbb{R}^{\ngen}$ and $\underline{p}_l, \overline{p}_l \in \mathbb{R}^{\numline}$ respectively, and $\phi \in \mathbb{R}^{\numline \times \nbus}$ is the power transfer distribution factor matrix, which maps the net power $p$ linearly to the line flows $p_l \in \mathbb{R}^{\numline}$.
\subsection{Stochastic Optimal Power Flow}
\label{sec:sOPF}
Deterministic \opf \eqref{eq:DCOPF} assumes perfect knowledge of the uncontrollable power $\pfix \in \mathbb{R}^{\nbus}$.
Instead, stochastic \opf models power consumption and/or the power feed-in due to renewables as (non-trivial) continuous second-order random vectors  $\rv{\pfix} \in \Ltwospacern{\nbus}$ with $\Omega \subseteq \mathbb{R}^{\nbus}$ as the set of possible outcomes.\footnote{More precisely, given a probability space $(\Omega, \mathcal{F}, \mathbb{P})$ the Hilbert space $\mathcal{L}^2(\Omega, \mathbb{R})$ w.r.t. measure $\mathbb{P}$ is the set of equivalence classes  modulo the almost-everywhere-equality relation of real-valued random variables $\rv{x}: (\Omega, \mathcal{F}, \mathbb{P}) \rightarrow \mathbb{R} $ with finite variance and inner product $\langle \rv{x}, \rv{y} \rangle = \ev{\rv{x}  \rv{y}}$, see~\cite{Sullivan15book}.
	We refrain from explicitly mentioning the $\sigma$-algebra $\mathcal{F}$ and the probability measure $\mathbb{P}$ when referring to the Hilbert space.
	For $\mathbb{R}^n$-valued random vectors $\rv{x}: (\Omega, \mathcal{F}, \mathbb{P}) \rightarrow \mathbb{R}^n$  we introduce the shorthand notation $\rv{x} \in \mathcal{L}^2(\Omega, \mathbb{R}^n)$ w.r.t. probability measure $\mathbb{P}$ in the sense that $\rv{x} \cong [\rv{x}_1, \hdots, \rv{x}_n]^\top$ with $\rv{x}_i \in \mathcal{L}^2(\Omega, \mathbb{R})$.
}
We will show that the probabilistic modeling of $\rv{\pfix}$ requires a reformulation of the deterministic \opf problem~\eqref{eq:DCOPF} in terms of random variables as decision variables.

\subsubsection*{Power Balance}
Power balance despite probabilistic uncertainties can be achieved by---formally---optimizing over random variables.
To see this, choose some \emph{fixed} power generation $\pvar \in \mathbb{R}^{\nbus}$ as in the \opf problem~\eqref{eq:DCOPF}.
Then, this will violate the power balance almost surely, i.e.  
\begin{equation}
\text{a.s. }\forall \tilde{\xi} \in \Omega:  ~ ~ ~ \bs{1}_{\nbus}^\top (\pvar + \tilde{\pfix}) \neq 0,
\end{equation}
where $\tilde{\pfix} = \rv{\pfix}(\tilde{\xi}) \in \mathbb{R}^{\nbus}$ is the realization of the random variable uncontrollable power $\rv{\pfix}$ for the outcome $\tilde{\xi} \in \Omega$.
Hence, we introduce a feedback policy $\rv{\pvar} = \rv{\pvar}(\rv{\pfix}) \in \Ltwospacern{\nbus}$ for the generators.
More precisely, the feedback policy $\rv{\pfix} \mapsto \rv{\pvar}(\rv{\pfix})$ maps  stochastic uncertainties~$\rv{\pfix}$ to power set points that ensure power balance despite uncertainties.
Doing so, the decision variable $\rv{\pvar}$ of \opf becomes itself a random variable.\footnote{This is also observed in \cite{Zhang11}, where it is stated that ``due to random inputs [\dots], the output variables are also random.''}
The requirement of power balance leads to the notion of viability \cite{Bienstock14}.
\begin{defi}[Viable policy \cite{Bienstock14}]
	\label{defi:viability}
	A feedback policy $\rv{\pvar} = \rv{\pvar}(\rv{\pfix})$ is called viable if for any realization of the uncertainty $\rv{\pfix}$ the power flow equations are satisfied.\footnote{This definition of viability is slightly more general than the original definition from \cite{Bienstock14}: it is not limited to affine control policies, and it is not restricted to the \dc power flow setting. In the context of power systems, viability is related to power system stability.
		Specifically for steady state power flow this amounts to power balance despite uncertain fluctuations.}
	\hfill $\square$
\end{defi}
Viability of the policy $\rv{\pvar}(\rv{\pfix})$ is ensured if the power balance is satisfied in terms of random variables
\begin{subequations}
\label{eq:RandomVariablePowerBalance}
\begin{equation}
\bs{1}_{\nbus}^\top ( \rv{\pvar}(\rv{\pfix}) + \rv{\pfix} ) = 0,
\end{equation}
or written in terms of realizations of the random variables
\begin{equation}
\forall \tilde{\xi} \in \Omega: \bs{1}_{\nbus}^\top ( \tilde{\pvar}(\tilde{\pfix}) + \tilde{\pfix}  ) = 0,
\end{equation}
where $\tilde{\pvar}(\tilde{\pfix}) = \rv{\pvar}(\rv{\pfix}(\tilde{\xi})) \in \mathbb{R}^{\nbus}$ is the control input, and $\tilde{\pfix} = \rv{\pfix}(\tilde{\xi}) \in \mathbb{R}^{\nbus}$ is the realization of the uncontrollable power.
\end{subequations}
The decision variable of stochastic \opf is thus the random vector $\rv{\pvar} \in \Ltwospacern{\nbus}$ which corresponds to a feedback control policy.

\subsubsection*{Cost Function}
For stochastic \opf the cost function has to map \emph{policies} to scalars
\begin{equation}
\label{eq:GenericCostFunction}
\hat{J}: \Ltwospacern{\nbus} \rightarrow \mathbb{R}.
\end{equation}
A typical choice is $\hat{J}(\rv{\pvar}) = \ev{J(\rv{\pvar})}$, where $\ev{\cdot}$ is the expected value \cite{Kall82}.

\subsubsection*{Inequality Constraints}
The presence of uncertainties requires the inequality constraints \eqref{eq:DCOPF_cons_gen}, \eqref{eq:DCOPF_cons_line} to be reformulated, because inequality constraints formulated in terms of random variables are in general not meaningful.
This may be done, for example, either by using chance constraints (individual/joint) or robust counterparts.
Possible chance constraint formulations are \cite{Vrakopoulou12,Roald13,Calafiore2006}
\begin{subequations}
	\label{eq:ChanceConstraintFormulations}
	\begin{align}
	\label{eq:ChanceConstraintFormulations_joint}
	\text{joint:}\: &\begin{cases}
	\mathbb{P}( (\underline{u} \leq \rv{\pvar} \leq \overline{u}) \cap (\underline{p}_l \leq \rv{p}_l \leq \overline{p}_l) ) \geq 1 - \varepsilon,
	\end{cases}\\
	\label{eq:ChanceConstraintFormulations_double}	
	\text{double:}\: &\begin{cases}
	\mathbb{P}(\underline{u}_i \leq \rv{\pvar}_i \leq \overline{u}_i) \geq 1 - \varepsilon, & i \in \mathcal{\nbus},\\
	\mathbb{P}(\underline{p}_{l,j} \leq \rv{p}_{l,j} \leq \overline{p}_{l,j}) \geq 1 - \varepsilon, & j \in \mathcal{N}_l,
	\end{cases} \\
	\label{eq:ChanceConstraintFormulations_ind}	
	\text{individual:}\:& \begin{cases}
	\mathbb{P}(\underline{u}_i \leq \rv{\pvar}_i ) \geq 1 - \varepsilon, 		&i \in \mathcal{\nbus},\\
	\mathbb{P}(\rv{\pvar}_i \leq \overline{u}_i) \geq 1 - \varepsilon,   		&i \in \mathcal{\nbus},\\
	\mathbb{P}(\rv{p}_{l,j} \leq \overline{p}_{l,j}) \geq 1 - \varepsilon,   	&j \in \mathcal{N}_l,\\		
	\mathbb{P}(\underline{p}_{l,j} \leq \rv{p}_{l,j} ) \geq 1 - \varepsilon, 	&j \in \mathcal{N}_l,\\	\end{cases}
	\end{align}
\end{subequations}
where $\rv{p}_{l,j}$ is the random-variable power flow across line $j$.

\subsubsection*{$\mathcal{L}^2$-Optimization Problem}
Using the random-variable power balance~\eqref{eq:RandomVariablePowerBalance}, the generic cost function~\eqref{eq:GenericCostFunction}, and a chance constraint formulation from~\eqref{eq:ChanceConstraintFormulations}, the chance-constrained optimal power flow (\ccopf) problem can be written as
\begin{subequations} 
	\label{eq:DCsOPF_generic}
	\begin{align}
	\underset{\rv{\pvar} \in \Ltwospacern{\ngen}}{\operatorname{min}}~  & \hat{J}(\rv{\pvar}) 
	\label{eq:DCsOPF_CostFunction}\\
	\mathrm{s.\,t.} ~ ~ \,
	& \bs{1}_{\ngen}^\top (\rv{\pvar } + \rv{\pfix}) = 0,\\
	& \text{\eqref{eq:ChanceConstraintFormulations_joint}, \eqref{eq:ChanceConstraintFormulations_double}, or \eqref{eq:ChanceConstraintFormulations_ind}}.
	\label{eq:DCsOPF_ChanceConstraintFormulation}
	\end{align}
\end{subequations}
Any feasible feedback policy of~\eqref{eq:DCsOPF_generic} is viable, see Definition~\ref{defi:viability}.
The solution of the infinite-dimensional problem~\eqref{eq:DCsOPF_generic}---assuming it exists---is the optimal viable control policy~$\rv{\pvar}^\star(\rv{\pfix})$ that yields power balance for all realizations of the uncertainty~$\rv{\pfix}$, and satisfies the constraints in a chance-constrained sense.
Notice that the random-variable power balance is a direct consequence from modeling the uncertainties $\rv{\pfix}$ as random variables.
Instead, the specific choice of the cost function~\eqref{eq:GenericCostFunction} and the chance constraint formulations are modeling degrees of freedom.
Further, notice that the \ccopf~\eqref{eq:DCsOPF_generic} does not assume a specific probability distribution for the uncertain  power~$\rv{\pfix}$.

\begin{rema}[Economic dispatch vs. \ccopf]
	\label{rema:EconomicDispatchvsCCOPF}
	\till{
	Traditionally, an economic dispatch calculation provides base points for each generation unit, given some forcast.
	Automatic generation control (\agc) with prescribed participation factors accounts for mismatches in generation and demand online, i.e. it reacts to realizations of the uncertainties \cite{Wood96book}.}
	Chance-constrained \opf attempts to unify both steps while considering the specific stochastic nature of the fluctuations, and ensuring economic and secure operation of the overall system.	
	\till{This also means that the solutions from \ccopf are applicable on the same time scale as economic dispatch and \agc.}
	\hfill $\square$
\end{rema}

\begin{rema}[Online \opf vs. \ccopf]
	\label{rema:OnlineOPFvsCCOPF}
	\till{
		Online optimal power flow is another approach to tackle \opf under uncertainty \cite{Gan2016}.
		The main idea is to measure the realization of the uncertainty, to formulate and solve a corresponding \opf problem, and to apply the generation inputs to the grid.
		The conceptual advantage of online \opf is that it acts irrespective of the distribution of the uncertainty;
		the method reacts to realizations ``online.''
		However, online \opf relies on accurate online state estimation as well as reliable, fast, repeated, and accurate solutions of large-scale \opf problems.
		Chance-constrained \opf differs conceptually as it formulates a single optimization problem that determines policies intead of set points.
		These policies can be applied in real time, similarly to \agc.
		However, chance-constrained \opf hinges on both the uncertainty model and the problem formulation.
	\hfill $\square$}
\end{rema}

\subsection{Gaussian Uncertainties}
\label{sec:GaussianUncertainties}
The purpose of this section is to show that \ccopf from~\eqref{eq:DCsOPF_generic} embeds existing approaches to \ccopf under \dc power flow and Gaussian uncertainties \cite{Roald15b,Roald16a,Roald15}.
To this end, we restate the setting from \cite{Roald15b,Roald16a,Roald15} entirely in random variables.
\begin{sett}[Linear cost, Gaussian uncertainties \cite{Roald15b,Roald16a,Roald15}]
	\label{ass:Roald}
	\mbox{}
	\begin{enumerate}
		\setlength{\itemsep}{0pt}
		\item \label{item:LinearCost} The deterministic cost function $J(\pvar) = h^\top \pvar$ is linear.
		\item \label{item:Gaussian} The uncertain power demand $\rv{\pfix}$ is modeled as
		\begin{subequations}
			\label{eq:LinesParameterization}
			\begin{equation}
			\label{eq:ForecastUncertainty}
			\rv{\pfix} = \pfix_0 +  \varsqrt\pfix \rv{\xi},
			\end{equation} 
			with $\pfix_0 \in \mathbb{R}^{\nbus}$, nonsingular $\varsqrt{\pfix} \neq 0_{\nbus \times \nbus} \in \mathbb{R}^{\nbus {\times} \nbus}$. 
			The $\nbus$-valued stochastic germ $\rv{\xi} \sim \normaldist(0,I_{\nbus})$ is a standard multivariate Gaussian.
		\item \label{item:AGCParameterization}The feedback policy $\rv{\pvar}$ is
			\begin{equation}
			\begin{aligned}
			\label{eq:AGCParameterization}
			\rv{\pvar} = \rv{\pvar}(\rv{\xi}) &= \pvar_0 - \alpha \, \bs{1}_{\ngen}^\top \varsqrt{\pfix}\rv{\xi},\\
			\bs{1}_{\nbus}^\top (\pvar_0 + \pfix_0) &= 0, \\
			1 - \bs{1}_{\nbus}^\top \alpha &= 0,
			\end{aligned}
			\end{equation}
			with unknown $\pvar_0, \alpha \in \mathbb{R}^{\ngen}$.\footnote{This choice of feedback control is also called \emph{balancing policy} \cite{Roald16a}, \emph{reserves representation} \cite{Vrakopoulou13b}, or \emph{base point/participation factor} \cite{Wood96book}.}
			\item Inequality constraints are modeled as individual chance constraints, and rewritten using the first and second moment, e.g. for upper bounds
			\begin{equation}
			\label{eq:CCReformulation}
			\ev{\rv{x}} + \beta(\varepsilon) \sqrt{\var{\rv{x}}} \leq \overline{x} \quad \Rightarrow \quad \mathbb{P}(\rv{x} \leq \overline{x}) \geq 1 - \varepsilon,
			\end{equation} 
			where $\varepsilon \in [0,1]$ is a user-defined security level.\footnote{\label{foot:MomentBasedReformulations}%
			These formulations are popular, because they are exact for Gaussian random variables (choosing $\beta$ accordingly); i.e. \eqref{eq:CCReformulation} holds in both directions.
			For general unimodal, symmetric distributions, the reformulations \eqref{eq:CCReformulation} can be conservative, nevertheless yield convex reformulations \cite{Roald15b}.
			For highly skewed distributions less conservative results can be obtained by considering higher-order (centralized) moments, i.e. skewness and/or kurtosis \cite{Popescu05}.
			}
			\hfill $\square$
		\end{subequations}
	\end{enumerate}
\end{sett}
To obtain the numerical values of the expected generation $\pvar_0$ and the coefficients $\alpha$ from Setting~\ref{ass:Roald}, \cite{Roald15,Roald15b} suggest solving the following optimization problem
\begin{subequations} 
	\label{eq:DC_OPF_Line}
	\begin{align}
	\label{eq:DCSOPF_CostFunction_Line}
	\underset{\pvar_0, \alpha \in \mathbb{R}^{\nbus}}{\operatorname{min}}~  & \hspace{1.9mm} h^\top \pvar_0\\
	\mathrm{s.\,t.} ~ ~ \,
	\label{eq:PowerBalanceLine}
	& \begin{array}{l}
	\bs{1}_{\nbus}^\top (\pvar_0 + \pfix_0) = 0,\\
	1 - \bs{1}_{\nbus}^\top \alpha = 0
	\end{array} \\
	& \label{eq:CC_factor_gen} \underline{\pvar}_i \leq \ev{\rv{\pvar}_i} \pm \beta_{\pvar} \sqrt{\var{\rv{\pvar}_i}} \leq \overline{\pvar}_i,\\
	& \label{eq:CC_factor_line}	 \underline{p}_{l,j} \leq \ev{\rv{p}_{l,j}} {\pm} \beta_{l} \sqrt{\var{\rv{p}_{l,j}}} \leq \overline{p}_{l,j}, \\	
	\nonumber	& \forall i \in \mathcal{\nbus}, ~ \forall j \in \mathcal{N}_l,
	\end{align}
	where $\rv{p}_{l,j}$ is the $j$\textsuperscript{th} entry of the line flow $\rv{p}_l = \phi \, (\rv{\pvar} + \rv{\pfix})$ with the matrix of power transfer distribution factors $\phi $.
\end{subequations}
The nominal \dc power flow and the summation condition \eqref{eq:PowerBalanceLine} ensure viability of the feedback policy $\rv{\pvar}$, cf. \cite[Lemma 2.1]{Bienstock14}.
The generation limits and line flow limits are modeled as individual chance constraints that admit the exact reformulation \eqref{eq:CC_factor_gen} and \eqref{eq:CC_factor_line} for Gaussian uncertainties \cite{Roald15b}; they correspond to the individual chance constraint formulation from \eqref{eq:ChanceConstraintFormulations}.
Let $\pvar_0^\star, \alpha^\star$ denote the optimal solution of Problem \eqref{eq:DC_OPF_Line}---assuming it exists.
Then, the optimal feedback policy is $\rv{\pvar}^\star(\rv{\xi}) = \pvar_0^\star -  \alpha^\star \bs{1}_{\ngen}^\top \varsqrt{\pfix}\rv{\xi}$.
Given the realization  $\tilde{\xi}$ of the random variable $\rv{\xi}$ the corresponding realization $\tilde{\pvar}^\star$ of the random variable $\rv{\pvar}^\star$ becomes $\tilde{\pvar}^\star = \rv{\pvar}^\star(\rv{\tilde{\xi}}) = \pvar_0^\star - \alpha^\star \bs{1}_{\ngen}^\top \varsqrt{\pfix} \tilde{\xi}$, which is the control action.

\begin{rema}[Gaussian random variables]
	\label{rema:Gaussianity}
	Under Setting \ref{ass:Roald} the power demand $\rv{\pfix}$, the feedback policy $\rv{\pvar}$, and the line power $\rv{p}_l$ are Gaussian random variables,
	\begin{subequations}
		\label{eq:GaussianVariables}
		\begin{align}
		\label{eq:DemandParameterization_Gaussian}
		\rv{\pfix} &\sim \normaldist(\pfix_0, \Sigma_{\pfix}), && \Sigma_{\pfix} = \varsqrt{\pfix} \varsqrt{\pfix}^\top \\
		\label{eq:RV_GaussianParameterization}
		\rv{\pvar} &\sim \normaldist(\pvar_0, \Sigma_{\pvar}), &&\Sigma_{\pvar} = (\alpha \bs{1}_{\nbus}^\top \varsqrt{\pfix}) (\alpha \bs{1}_{\nbus}^\top \varsqrt{\pfix})^\top,\\
		\rv{p}_l &\sim \normaldist(p_{l0}, \Sigma_{l}), && \Sigma_{l} {=} (\phi (I {-} \alpha \bs{1}^\top_{\ngen}) \varsqrt{\pfix}) (\phi (I {-} \alpha \bs{1}^\top_{\ngen}) \varsqrt{\pfix})^\top \!  ,
		\end{align}
	with all covariance matrices being positive semidefinite.
	Moreover, all random variables admit an affine parameterization
	\begin{equation}
	\rv{x} = x_0 + \Sigma_x \xi, \quad \rv{x} \in \{ \rv{\pfix}, \rv{\pvar}, \rv{p}_l\},~x \in \{d, u , l\},
	\end{equation}
	w.r.t. the stochastic germ $\rv{\xi} \sim \normaldist(0,I_{\nbus})$.
	\end{subequations}	
	\hfill $\square$
\end{rema}
Remark \ref{rema:Gaussianity} shows and emphasizes the dichotomy of $\rv{\pvar}$: it is both a random variable and a feedback policy in terms of the (realization of the) stochastic germ, i.e. once the realization of $\rv{\xi}$ is known the evaluation of $\pvar(\xi)$ yields an applicable control.\footnote{This assumes perfect and immediate measurement of the realization of $\xi$.
	State estimation of power systems is beyond the scope of this paper.}

Now we can address the question raised at the beginning of this section, namely how  \ccopf~\eqref{eq:DCsOPF_generic} relates to Problem~\eqref{eq:DC_OPF_Line}.
To this end, consider \ccopf~\eqref{eq:DCsOPF_generic} under Setting~\ref{ass:Roald} with the cost formulation $\hat{J}(\rv{\pvar}) = \ev{J(\rv{\pvar})}$.
This gives
\begin{subequations} 
	\label{eq:DC_OPF_uncertainQP}
	\begin{align}
	\label{eq:DCSOPF_CostFunction}
	\underset{\rv{\pvar} \in \Ltwospacern{\ngen}}{\operatorname{min}}~  & \mathrm{E} [{h^\top \rv{\pvar }}]\\
	\mathrm{s.\,t.} ~ ~ \,
	\label{eq:RandomVariablePowerBalanceOPF}
	& \bs{1}_{\ngen}^\top (\rv{\pvar } + \rv{\pfix}) = 0,\\
	& \nonumber \text{\eqref{eq:CC_factor_gen}, \eqref{eq:CC_factor_line}}, \, \forall i \in \mathcal{\nbus}, ~ \forall j \in \mathcal{N}_l.
	\end{align}
\end{subequations}

\begin{propo}[Equivalence of \eqref{eq:DC_OPF_Line} and \eqref{eq:DC_OPF_uncertainQP}]
	\label{propo:SpecialCase}
	Let the optimal solution to Problem~\eqref{eq:DC_OPF_Line} be $\pvar_0^\star, \alpha^\star \in \mathbb{R}^{\ngen}$.	
	Furthermore, let the optimal solution to Problem~\eqref{eq:DC_OPF_uncertainQP} be $\rv{\pvar}^\star \in \Ltwospacern{\ngen}$.
	Then,
	\begin{equation}
	\rv{\pvar}^\star = \pvar_0^\star - \alpha^\star \, \bs{1}_{\ngen}^\top \varsqrt{\pfix}\rv{\xi}
	\end{equation}
	holds such that $\rv{\pvar}^\star \sim \normaldist(\pvar_0^\star, (\alpha^\star \bs{1}_{\nbus}^\top \varsqrt{\pfix}) (\alpha^\star \bs{1}_{\nbus}^\top \varsqrt{\pfix})^\top)$.
	\hfill $\square$
\end{propo}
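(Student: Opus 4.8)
The plan is to establish the identity by sandwiching the optimal values of the two problems. One direction is immediate: any feasible $\pvar_0,\alpha$ of~\eqref{eq:DC_OPF_Line} yields the affine policy $\rv{\pvar}:=\pvar_0-\alpha\,\bs{1}_{\ngen}^\top\varsqrt{\pfix}\rv{\xi}\in\Ltwospacern{\ngen}$, which I would verify is feasible for~\eqref{eq:DC_OPF_uncertainQP} with the same cost. Indeed, the two scalar equalities in~\eqref{eq:PowerBalanceLine} give $\bs{1}_{\ngen}^\top(\rv{\pvar}+\rv{\pfix})=\bs{1}_{\nbus}^\top(\pvar_0+\pfix_0)+(1-\bs{1}_{\nbus}^\top\alpha)\,\bs{1}_{\nbus}^\top\varsqrt{\pfix}\rv{\xi}=0$, so the random-variable power balance holds; moreover $\ev{\rv{\pvar}}=\pvar_0$, $\var{\rv{\pvar}_i}=\alpha_i^2\,\bs{1}_{\nbus}^\top\varsqrt{\pfix}\varsqrt{\pfix}^\top\bs{1}_{\nbus}$, and $\rv{p}_l=\phi(\rv{\pvar}+\rv{\pfix})=\phi(\pvar_0+\pfix_0)+\phi(I-\alpha\bs{1}_{\ngen}^\top)\varsqrt{\pfix}\rv{\xi}$ reproduces precisely the mean--variance pairs that enter~\eqref{eq:CC_factor_gen}--\eqref{eq:CC_factor_line}. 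Hence the optimal value of~\eqref{eq:DC_OPF_uncertainQP} is at most that of~\eqref{eq:DC_OPF_Line}, and $\pvar_0^\star-\alpha^\star\bs{1}_{\ngen}^\top\varsqrt{\pfix}\rv{\xi}$ is a feasible candidate minimizer.

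For the converse I would use that the objective $\ev{h^\top\rv{\pvar}}=h^\top\ev{\rv{\pvar}}$ and the constraints~\eqref{eq:CC_factor_gen}--\eqref{eq:CC_factor_line} depend on $\rv{\pvar}$ only through $\ev{\rv{\pvar}}$ and the second moments of the components $\rv{\pvar}_i$ and of the line flows $\rv{p}_{l,j}$. Let $\rv{\pvar}$ be a minimizer of~\eqref{eq:DC_OPF_uncertainQP} (assumed to exist) and write $\rv{\pvar}=\ev{\rv{\pvar}}+\rv{w}$ with $\ev{\rv{w}}=0$; applying $\ev{\bs{1}_{\nbus}^\top(\cdot)}$ to the power balance gives $\bs{1}_{\nbus}^\top(\ev{\rv{\pvar}}+\pfix_0)=0$, while the fluctuation part reduces to $\bs{1}_{\nbus}^\top\rv{w}=-\bs{1}_{\nbus}^\top\varsqrt{\pfix}\rv{\xi}$. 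Projecting $\rv{w}$ componentwise and orthogonally in $\Ltwospacern{\nbus}$ onto the subspace spanned by the germ entries $\rv{\xi}_1,\dots,\rv{\xi}_{\nbus}$ replaces $\rv{w}$ by an affine term $W\rv{\xi}$: the removed component is orthogonal to every $\rv{\xi}_k$, hence it does not change $\ev{\rv{\pvar}}$, it can only decrease $\var{\rv{\pvar}_i}$ and $\var{\rv{p}_{l,j}}$, and---because $\bs{1}_{\nbus}^\top\varsqrt{\pfix}\rv{\xi}$ itself lies in that subspace---it preserves the power balance. So the minimizer may be taken affine in $\rv{\xi}$, say $\rv{\pvar}=\ev{\rv{\pvar}}+W\rv{\xi}$ with $\bs{1}_{\nbus}^\top W=-\bs{1}_{\nbus}^\top\varsqrt{\pfix}$.

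The step I expect to be the main obstacle is collapsing a general affine term $W\rv{\xi}$ to the participation-factor form and identifying the result with the minimizer of~\eqref{eq:DC_OPF_Line}. Decomposing $W$ along and orthogonally to the fixed row $\bs{1}_{\nbus}^\top\varsqrt{\pfix}$ isolates a part $-\alpha\bs{1}_{\nbus}^\top\varsqrt{\pfix}$---with $\bs{1}_{\nbus}^\top\alpha=1$ enforced by the column-sum condition---from a remainder that can only increase each generator variance $\var{\rv{\pvar}_i}$, so discarding it keeps~\eqref{eq:CC_factor_gen} satisfied. The delicate point is that this remainder need not be orthogonal to the line-flow fluctuations, so discarding it may enlarge some $\var{\rv{p}_{l,j}}$; closing this gap is where one has to exploit the structure of Setting~\ref{ass:Roald} (the nonsingular $\varsqrt{\pfix}$ and the moment-based reformulation being exact for Gaussian random variables) together with the standing assumption that~\eqref{eq:DC_OPF_uncertainQP} attains its minimum. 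Once the minimizer is known to be of participation-factor form, its data $(\ev{\rv{\pvar}},\alpha)$ are feasible for~\eqref{eq:DC_OPF_Line} with the same cost, optimality forces $\ev{\rv{\pvar}}=\pvar_0^\star$ and $\alpha=\alpha^\star$, hence $\rv{\pvar}^\star=\pvar_0^\star-\alpha^\star\bs{1}_{\ngen}^\top\varsqrt{\pfix}\rv{\xi}$; the stated Gaussian law then follows from Remark~\ref{rema:Gaussianity}.
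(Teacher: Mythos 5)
Your first direction is correct and is essentially the computation the paper itself performs: substituting the participation-factor policy into the random-variable power balance gives the Gaussian residual \eqref{eq:PowerFlowInserted}, which degenerates to zero iff its mean and variance vanish, i.e.\ iff \eqref{eq:PowerBalanceLine} holds, and the cost and moment-based inequalities then match term by term. The difficulty is entirely in your converse, and there the proposal has a genuine gap that you flag but do not close---and that in fact cannot be closed in the form you state it. After your (correct) projection step the minimizer is affine, $\rv{\pvar}=\ev{\rv{\pvar}}+W\rv{\xi}$ with $\bs{1}_{\nbus}^\top W=-\bs{1}_{\nbus}^\top\varsqrt{\pfix}$, but the class of such general affine policies is \emph{strictly larger} than the rank-one class $W=-\alpha\,\bs{1}_{\nbus}^\top\varsqrt{\pfix}$ over which \eqref{eq:DC_OPF_Line} optimizes; the paper emphasizes exactly this in its remark on local versus global balancing. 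Discarding the component of $W$ orthogonal to the row $\bs{1}_{\nbus}^\top\varsqrt{\pfix}$ does lower every generator variance, but, as you note, it can increase line-flow variances $\var{\rv{p}_{l,j}}$, so the reduced policy need not be feasible; conversely, a non-rank-one $W$ can shrink the uncertainty margins in \eqref{eq:CC_factor_gen}--\eqref{eq:CC_factor_line}, enlarge the feasible set of means $\pvar_0$, and strictly lower the cost $h^\top\pvar_0$. Hence the unrestricted $\mathcal{L}^2$ minimizer need not coincide with the solution of \eqref{eq:DC_OPF_Line}, and no appeal to nonsingularity of $\varsqrt{\pfix}$ or to exactness of the Gaussian chance-constraint reformulation rescues the collapse to participation-factor form.

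The resolution is that the proposition is not the strong statement you are trying to prove. Problem~\eqref{eq:DC_OPF_uncertainQP} is posed \emph{under Setting~\ref{ass:Roald}}, whose third item already parameterizes the decision variable as $\rv{\pvar}=\pvar_0-\alpha\,\bs{1}_{\ngen}^\top\varsqrt{\pfix}\rv{\xi}$ with the two side conditions of \eqref{eq:AGCParameterization}. The paper's proof substitutes this form into the power balance, observes that the resulting scalar Gaussian vanishes iff both conditions \eqref{eq:NecessaryConditions_Line} hold, notes that cost and inequalities are identical by construction, and concludes that the infinite-dimensional problem is equivalently a finite-dimensional problem in $(\pvar_0,\alpha)$, i.e.\ exactly \eqref{eq:DC_OPF_Line}. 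In other words, the paper proves equivalence of two \emph{parameterized} problems; optimality of the participation-factor form within all of $\Ltwospacern{\ngen}$ is what your converse would require, and it is false in general.
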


\begin{proof}
	First, the cost function \eqref{eq:DCSOPF_CostFunction}  becomes $\ev{h^\top 	\rv{\pvar}} = h^\top \pvar_0$, which is \eqref{eq:DCSOPF_CostFunction_Line}.
	Using the uncertainty modeling \eqref{eq:LinesParameterization} the random-variable \dc power flow \eqref{eq:RandomVariablePowerBalanceOPF} becomes
	\begin{align}
	\label{eq:PowerFlowInserted}
	\bs{1}_{\nbus}^\top (\rv{\pvar }+ \rv{\pfix}) 
	= \bs{1}_{\nbus}^\top (\pvar_0 + \pfix_0) + (1 - \bs{1}_{\nbus}^\top \alpha) \,  \bs{1}_{\nbus}^\top \varsqrt{\pfix}\rv{\xi} =: \rv{x}.
	\end{align}
	The real-valued random variable $\rv{x}$ is a linear combination of the Gaussian random variables $\rv{\pfix}$ and $\rv{\pvar}$,
	\begin{equation}
	\rv{x} \sim \normaldist(\bs{1}_{\ngen}^\top (\pvar_0 + \pfix_0), (1 - \bs{1}_{\ngen}^\top  \alpha)^2 \bs{1}_{\ngen}^\top \varsqrt{\pfix}\varsqrt{\pfix}^\top \bs{1}_{\ngen}),
	\end{equation}
	hence it is fully described by its mean and variance.
	The random variable $\rv{x}$ from \eqref{eq:PowerFlowInserted} has to degenerate to zero according to~\eqref{eq:RandomVariablePowerBalanceOPF}.
	This is the case if and only if its mean and variance are zero,
	\begin{subequations}
		\label{eq:NecessaryConditions_Line}
		\begin{align}
		\label{eq:EV_necessary}
		\ev{\rv{x}} &=  \bs{1}_{\ngen}^\top (\pvar_0 + \pfix_0) \overset{!}{=} 0\\
		\label{eq:Var_necessary}		
		\var{\rv{x}} &=  (1 - \bs{1}_{\ngen}^\top  \alpha)^2 \bs{1}_{\ngen}^\top \varsqrt{\pfix}\varsqrt{\pfix}^\top \bs{1}_{\ngen}\, \overset{!}{=} 0. 
		\end{align}
	\end{subequations}
	This means that random-variable \dc power flow under Assumption \ref{ass:Roald} is equivalent to considering the nominal \dc power flow~\eqref{eq:EV_necessary} and the summation condition \eqref{eq:Var_necessary}, cf. \eqref{eq:PowerBalanceLine}.\footnote{An alternative proof of viability is given in \cite{Bienstock14}.}
	The inequality constraints are identical by construction.
	The infinite-dimensional Problem \eqref{eq:DC_OPF_uncertainQP} is equivalently represented by a deterministic finite dimensional problem in terms of $\pvar_0$ and $\alpha$.
	As such it is equivalent to Problem \eqref{eq:DC_OPF_Line}, and the optimal feedback follows from \eqref{eq:AGCParameterization}.
\end{proof}
The consequence of Proposition \ref{propo:SpecialCase} is that Problem \eqref{eq:DC_OPF_Line} is a reformulation of the infinite-dimensional Problem \eqref{eq:DC_OPF_uncertainQP} in terms of the mean and variance of the Gaussian decision variable~$\rv{\pvar}$.
Problem \eqref{eq:DC_OPF_uncertainQP} is thus a generalization of Problem \eqref{eq:DC_OPF_Line}.
Further, the optimal policies stemming from the solution of Problem~\eqref{eq:DC_OPF_Line} are Gaussian random variables.
Gaussianity of all occurring random variables follows from linearity of the \dc power flow, cf. Remark \ref{rema:Gaussianity}.
In fact, linearity of the \dc power flow allows a more general statement when interpreted as a mapping: regardless of the distribution of the uncertainty~$\rv{\pfix}$, the control~$\rv{\pvar}$ has---qualitatively---the same distribution.
It hence stands to reason that Problem~\eqref{eq:DC_OPF_uncertainQP} allows the computation of optimal affine policies that satisfy power balance also for non-Gaussian uncertainties---which is shown in Section~\ref{sec:ThreeSteps} by means of a three-step methodology.
In preparation, however, we have to introduce polynomial chaos expansion as a tool that allows a structured treatment of random variables.

\section{Polynomial Chaos Expansion}
\label{sec:PCE}
We review the basic elements of polynomial chaos expansion in Section~\ref{sec:PCE_intro}; refer to \cite{Sullivan15book, Xiu10book} for a more detailed introduction.
\subsection{Introduction}
\label{sec:PCE_intro}
Consider $\nxi$ independent second-order random variables $\rv{\xi}_i \in \mathcal{L}^2(\Omega_i, \mathbb{R})$ w.r.t. the measure $\mathbb{P}_i$ for $i = 1,\hdots, \nxi$.
The random vector $\rv{\xi} \cong [\rv{\xi}_1,\hdots, \rv{\xi}_{\nxi}]^\top$ is called stochastic germ.
Denote by $\Omega = \Omega_1 \times \cdots \times \Omega_{\nxi}$ the support, and let $\mathbb{P} = \mathbb{P}_1 \cdots \mathbb{P}_{\nxi}$ be the product measure.
Polynomial chaos then allows to express any random variable $\rv{x} \in \mathcal{L}^2(\Omega, \mathbb{R})$ w.r.t. measure $\mathbb{P}$ as a linear combination of orthogonal $\nxi$-variate polynomials.
To this end, let the Hilbert space $\mathcal{L}^2(\Omega, \mathbb{R})$ be spanned by the set of $\nxi$-variate polynomials $\{\basisfun_\ell \}_{\ell=0}^{\infty}$ that are orthogonal, i.e.
\begin{equation}
\label{eq:orthogonality}
\ev{\basisfun_\ell \basisfun_k} = \langle \basisfun_\ell, \basisfun_k \rangle = \int \basisfun_\ell(\tau) \basisfun_k(\tau)  \mathrm{d}\mathbb{P}(\tau) = \gamma_\ell \delta_{\ell k},
\end{equation}
for all $\ell,\, k \in \mathbb{N}_0$, where $\gamma_\ell$ is a positive constant, and $\delta_{\ell k}$ is the Kronecker-delta.\footnote{The (non-unique) multivariate basis can be constructed from the univariate bases cf. \cite{Sullivan15book}.}
W.l.o.g. we choose $\basisfun_0 = 1$.\footnote{For univariate stochastic germs one chooses the basis s.t. $\operatorname{deg} \basisfun_\ell = \ell$.}
Polynomial chaos allows to rewrite $\mathbb{R}^{n_x}$-valued random vectors $\rv{x} = [\rv{x}_1, \hdots, \rv{x}_{n_x}]^\top$ with elements $\rv{x}_i \in \mathcal{L}^2(\Omega, \mathbb{R})$ for $i = 1, \hdots, n_x$ as
\begin{subequations}
	\begin{align}
	\label{eq:PCE_infty}
	\rv{x} &=  	\sum_{\ell=0}^\infty x_\ell \basisfun_\ell \text{ with } x_\ell = [x_{1,\ell}, \hdots, x_{n_x,\ell}]^\top \in \mathbb{R}^{n_x},
	\end{align}
	and
	\begin{equation}
	\label{eq:PCEcoeffs}
	x_{i,\ell} = \frac{\langle \rv{x}_i, \basisfun_\ell \rangle}{\langle \basisfun_\ell, \basisfun_\ell \rangle} \in \mathbb{R}.
	\end{equation}		
	The vector $x_{\ell}$ contains all so-called \pce coefficients  \eqref{eq:PCEcoeffs}.
\end{subequations}
For numerical implementations the infinite sum \eqref{eq:PCE_infty} is truncated after $L + 1 \in \mathbb{N}$ terms
\begin{subequations}
	\label{eq:PCE_complete}	
	\begin{align}
	\rv{x} \approx \hat{\rv{x}} &= \sum_{\ell=0}^{L} x_\ell \basisfun_\ell = x_0 + \uppercase{x} \basisfunVector, \\
	\text{with ~} \uppercase{x} &= [x_1\quad \hdots \quad x_L] \in \mathbb{R}^{n_x \times L},\\
	\basisfunVector &= [\basisfun_1 \quad \hdots \quad \basisfun_L]^\top.
	\end{align}
\end{subequations}
The truncation error $\|\rv{x}_i - \hat{\rv{x}}_i \| $ is $\mathcal{L}^2$-optimal, i.e. $\lim_{L\to \infty} \| \rv{x}_i - \hat{\rv{x}}_i \| = 0$, in the induced norm $\| \cdot \|$, see \cite{Sullivan15book, Xiu10book}. 
The dimension of the subspace $\operatorname{span} \, \{\basisfun_\ell\}_{\ell=0}^L \subseteq \mathcal{L}^2(\Omega, \mathbb{R})$ is
\begin{equation}
\label{eq:PCEdimension}
L + 1=\frac{(\nxi+n_d)!}{\nxi ! n_d!},
\end{equation}
where $\nxi$ is the dimension of the stochastic germ and $n_d$ the highest polynomial degree.
Hence, the subspace is spanned by orthogonal basis polynomials in $n_\xi$ variables of degree at most $n_d$.
Statistics of $\rv{x}$ can be obtained directly from the \pce coefficients, without having to sample, e.g. \cite{Sullivan15book,Xiu10book}
\begin{equation}
\label{eq:PCE_statistics}
\ev{\hat{\rv{x}}} = x_0, ~ ~ \var{\hat{\rv{x}}} = \sum_{\ell=1}^{L} x_\ell x_\ell^\top \langle \basisfun_\ell, \basisfun_\ell \rangle.
\end{equation}
\begin{rema}[Higher-order moments]
	\label{rema:HigherOrderMoments}
	The fact that moments of the random variable $\hat{\rv{x}}$ can be computed from its \pce coefficients alone can be leveraged for moment-based bounds or bounds that consider more than two moments \cite{Popescu05}.
	That is, highly skewed distributions may require more than two moments to reduce conservatism in \eqref{eq:CC_factor_gen} and \eqref{eq:CC_factor_line}.
	For which moments to include see, e.g. \cite{Popescu05}.
	If additional moments are included \pce is still applicable, but comes at a higher computational cost potentially, since convexity may be lost.
	\hfill $\square$
\end{rema}
The applicability of \pce hinges on the dimension $(L + 1)$ from~\eqref{eq:PCEdimension}.
It is desirable to have both a low dimension \emph{and} exact expansions.
How to achieve this is considered next.

\subsection{Exact Affine Polynomial Chaos}
\label{sec:ExactAffinePCE}
We are interested in random variables that admit a truncated polynomial chaos expansion that is both \emph{exact} in the sense that~\eqref{eq:PCE_complete} holds with equality, and \emph{affine} in the sense that the maximum degree of the polynomial basis is one.
\begin{defi}[Exact affine \pce]
	\label{defi:ExactAffinePolynomialChaos}
	A random variable $\rv{x} \in \mathcal{L}^2(\Omega, \mathbb{R})$ is said to have an exact affine \pce if
	\begin{equation}
	\label{eq:ExactAffine_Condition}
	\rv{x} = \tilde{\rv{x}} = \sum_{\ell=0}^{L} x_\ell \basisfun_\ell \quad \text{with} \quad \underset{\ell = 0, \hdots, L}{\operatorname{max}}\, (\operatorname{deg} \, \basisfun_\ell) = 1,
	\end{equation}
	i.e., the orthogonal polynomial basis $\{ \basisfun_\ell \}_{\ell = 0}^L$ has at most degree one.
	The $\mathbb{R}^n$-valued random vector $x \in \mathcal{L}^2(\Omega,\mathbb{R}^n)$ is said to have an exact affine \pce if every entry $\rv{x}_i$ admits an exact affine \pce \eqref{eq:ExactAffine_Condition} for $i = 1, \hdots, n$.
	\hfill $\square$
\end{defi}
In other words---Definition~\ref{defi:ExactAffinePolynomialChaos} demands the polynomial basis $\{\basisfun_\ell\}_{\ell=0}^L$ to be at most affine, i.e. the stochastic germ $\xi$ appears in no higher order.
It remains to discuss how to choose this specific polynomial basis.
For a variety of well-known univariate distributions, for example Beta, Gamma, Gaussian, or Uniform distributions, the orthogonal bases that yield exact affine \pces~\eqref{eq:PCE_complete} are known.
In the remainder we refer to these uncertainties as \emph{canonical uncertainties}.
\begin{coro}[Exact affine \pce for canonical uncertainties \cite{Xiu10book}]
	\label{rema:ExactAffinePCE}
	Let $\rv{x} \in \mathcal{L}^2(\Omega, \mathbb{R})$ follow one of the following univariate distributions: Beta, Gamma, Gaussian, or Uniform.
	Then, for the following stochastic germs $\xi$
	\begin{center}
		\small
		\begin{tabular}{l   l  l  l  l}
			Distribution & Support & Polyn. Basis &  $\basisfun_\ell$ & Notation\\
			\hline
			Beta & $[0,1]$ & Jacobi & $P_\ell^{(\alpha,\beta)}$ & $\xi \sim \betadist(\alpha, \beta)$\\
			Gamma  & $[0,\infty)$ & Gen. Laguerre & $L_\ell$ & $\xi \sim \Gamma(p)$\\
			Gaussian  & $(-\infty,\infty)$ & Hermite & $H\!e_\ell$ & $\xi \sim \normaldist(0,1)$\\
			Uniform  & $[0,1]$ & Legendre & $P_\ell$ & $\xi \sim \mathsf{U}(0,1)$
		\end{tabular}
	\end{center}
	\normalsize
	the \pce of $\rv{x}$
	\begin{equation*}
	\rv{x} = \hat{\rv{x}} = \sum_{\ell = 0}^{1} x_\ell \basisfun_\ell = x_0 + x_1 \basisfun_1
	\end{equation*}
	is an exact affine \pce in the basis $\{ \basisfun_\ell \}_{\ell = 0}^1$.
	\hfill $\square$
\end{coro}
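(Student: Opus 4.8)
The plan is to exploit two elementary facts. First, each of the four listed laws belongs to a location--scale family whose standardized member is precisely the stochastic germ $\rv{\xi}$ in the table, so that $\rv{x}$ is an affine function of $\rv{\xi}$. Second, the associated classical orthogonal polynomials (Jacobi, generalized Laguerre, Hermite, Legendre --- the univariate members of the Wiener--Askey scheme, see \cite{Xiu10book}) satisfy $\operatorname{deg}\basisfun_\ell = \ell$, as already fixed in the footnote after \eqref{eq:orthogonality}. Combining the two shows that $\rv{x}$ lies in the two-dimensional subspace $\operatorname{span}\{\basisfun_0, \basisfun_1\}$, whence its truncated \pce at $L=1$ reproduces it exactly.

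First I would record that $\rv{x} \in \mathcal{L}^2(\Omega,\mathbb{R})$: the Beta and Uniform laws have bounded support, and the Gaussian and Gamma laws have finite moments of all orders, so $\var{\rv{x}} < \infty$ in every case and the \pce machinery of Section~\ref{sec:PCE_intro} applies. Next, for the matching germ $\rv{\xi}$ there exist constants $a_0 \in \mathbb{R}$ and $a_1 \neq 0$ with $\rv{x} = a_0 + a_1 \rv{\xi}$ almost surely; this is just the statement that $\rv{x}$ and $\rv{\xi}$ differ by an affine (location--scale) reparameterization within each of the four families, the degenerate sub-case being $a_0 = 0$, $a_1 = 1$, i.e. $\rv{x} \equiv \rv{\xi}$.

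Then, since $\basisfun_0 = 1$ and $\operatorname{deg}\basisfun_1 = 1$, we may write $\basisfun_1(\tau) = b_1 \tau + b_0$ with $b_1 \neq 0$, so that $\operatorname{span}\{\basisfun_0, \basisfun_1\} = \operatorname{span}\{1, \rv{\xi}\}$, the space of affine functions of the germ. Because $\rv{x} = a_0 + a_1 \rv{\xi}$ lies in this subspace, its $\ltwo$-orthogonal projection onto it --- which by \eqref{eq:PCEcoeffs} is exactly $\hat{\rv{x}} = x_0 \basisfun_0 + x_1 \basisfun_1$ --- coincides with $\rv{x}$; equivalently, matching coefficients yields $x_1 = a_1/b_1$ and $x_0 = a_0 - x_1 b_0$. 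Hence $\rv{x} = \hat{\rv{x}} = \sum_{\ell=0}^{1} x_\ell \basisfun_\ell$ with $\underset{\ell = 0,\hdots,1}{\operatorname{max}}(\operatorname{deg}\basisfun_\ell) = 1$, which is the exact affine \pce of Definition~\ref{defi:ExactAffinePolynomialChaos}; the random-vector case follows entrywise.

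There is no genuine obstacle here: the result is a catalog corollary of the Wiener--Askey scheme. The only point requiring care is the bookkeeping in the first fact --- verifying that, for each of the four families, the germ/basis pairing in the table is precisely the one for which the law of $\rv{\xi}$ is the orthogonality weight of $\{\basisfun_\ell\}$ while simultaneously $\rv{x}$ is affine in $\rv{\xi}$ --- together with fixing the basis normalization so that $\basisfun_1$ has a nonzero leading coefficient.
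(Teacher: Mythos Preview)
The paper states this corollary without proof, citing it as a known result from \cite{Xiu10book}; the subsequent Example~\ref{exmpl:UnivariateGaussian} merely illustrates the Gaussian case. Your argument is correct and is the natural one: the germ in each row is chosen so that $\rv{x}=a_0+a_1\rv{\xi}$, and since $\basisfun_0=1$ and $\operatorname{deg}\basisfun_1=1$, the span of $\{\basisfun_0,\basisfun_1\}$ is exactly the set of affine functions of $\rv{\xi}$, so the degree-one truncation is exact. The paper's Gaussian example is your argument specialized to $a_0=\mu$, $a_1=\sigma$, $b_0=0$, $b_1=1$. One minor terminological point: the Gamma law with support $[0,\infty)$ is a scale family rather than a location--scale family, so $a_0=0$ in that row; this is harmless since all you actually use is the affine relation $\rv{x}=a_0+a_1\rv{\xi}$, which you state correctly.
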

\begin{exmpl}
	\label{exmpl:UnivariateGaussian}
	Any univariate Gaussian random variable $\rv{x}~\,\sim\,~\normaldist(\mu,\sigma^2)$ admits the exact affine \pce with respect to the stochastic germ $\xi \sim \normaldist(0,1)$
	\begin{equation*}
	\rv{x} = \mu + \sigma \xi = x_0 + x_1 H\!e_1,
	\end{equation*}
	where $[x_0, x_1]  ^\top = [\mu, \sigma]^\top$ are the \pce coefficients, and $\{\basisfun_\ell\}_{\ell=0}^1 = \{H\!e_\ell\}_{\ell=0}^1 = \{1, \xi\}$ is the affine Hermite polynomial basis that is orthogonal w.r.t. the univariate Gaussian measure, namely
	\begin{align*}
	\langle \basisfun_0, \basisfun_0 \rangle& = \langle H\!e_0, H\!e_0 \rangle = \frac{1}{\sqrt{2 \pi}}\,\int_{\mathbb{R}} 1 \, \mathrm{e}^{-\frac{\tau^2}{2}} \mathrm{d}\tau = 1, \\
	\langle \basisfun_1, \basisfun_1 \rangle &= \langle H\!e_1, H\!e_1 \rangle = \frac{1}{\sqrt{2 \pi}}\,\int_{\mathbb{R}} \tau^2\, \mathrm{e}^{-\frac{\tau^2}{2}} \mathrm{d}\tau = 1, \\
	\langle \basisfun_0, \basisfun_1 \rangle &= \langle H\!e_0, H\!e_1 \rangle = \frac{1}{\sqrt{2 \pi}}\,\int_{\mathbb{R}} \tau\, \mathrm{e}^{-\frac{\tau^2}{2}} \mathrm{d}\tau = 0.
	\end{align*}
	\till{As can be seen, the univariate Gaussian admits an exact affine \pce in the Hermite polynomial basis.
	Of course it is possible to derive a polynomial chaos expansion of a Gaussian random variable in a different orthogonal basis, for example in a Legendre basis.
	In that case, however, the \pce of $\rv{x}$ will not be exact and affine with respect to this Legendre basis.}
\hfill $\square$
\end{exmpl}
Going beyond canonical uncertainties, the exact affine \pce for any random variable of finite variance is obtained as follows.
\begin{propo}[Exact affine \pce for non-canonical uncertainty]
	\label{propo:BeyondCanonicalUncertainty}
	Let $\rv{x} \in \mathcal{L}^2(\Omega, \mathbb{R})$ with probability measure $\mathbb{P}$ be given.
	Then, $\rv{x}$ can be used directly as the stochastic germ, and the \pce of $\rv{x}$
	\begin{equation}
	\label{eq:PCE_NonCanonical}
	\rv{x} = \hat{\rv{x}} = \sum_{\ell=0}^{1} x_\ell \basisfun_\ell = x_0 + x_1 \basisfun_1
	\end{equation}
	is exact affine with \pce coefficients $[x_0, x_1]^\top = [ \ev{\rv{x}}, 1 ]^\top$ w.r.t. the orthogonal basis $ \{ \basisfun_\ell \}_{\ell = 0}^1 = \{ 1, \rv{x} - \ev{\rv{x}} \}$.
	\hfill $\square$
\end{propo}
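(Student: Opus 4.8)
The plan is to verify the three defining properties of an exact affine \pce directly from Definitions~\ref{defi:ExactAffinePolynomialChaos} and the orthogonality requirement~\eqref{eq:orthogonality}, using the proposed germ $\rv{\xi} := \rv{x}$, candidate basis $\{\basisfun_0, \basisfun_1\} = \{1, \rv{x} - \ev{\rv{x}}\}$, and candidate coefficients $[x_0, x_1]^\top = [\ev{\rv{x}}, 1]^\top$. First I would check that $\rv{x}$ is an admissible stochastic germ: by hypothesis $\rv{x} \in \mathcal{L}^2(\Omega, \mathbb{R})$ with probability measure $\mathbb{P}$, so it is a second-order random variable, which is exactly what the \pce construction in Section~\ref{sec:PCE_intro} requires ($n_\xi = 1$, with $\Omega$ and $\mathbb{P}$ induced by $\rv{x}$). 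Next I would confirm the basis has the right structure: $\basisfun_0 = 1$ is the (normalized) constant polynomial as demanded W.l.o.g.\ after~\eqref{eq:orthogonality}, and $\basisfun_1(\tau) = \tau - \ev{\rv{x}}$ is a degree-one polynomial in the germ, so $\max_{\ell=0,1}(\operatorname{deg}\basisfun_\ell) = 1$, matching~\eqref{eq:ExactAffine_Condition}.

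The substantive step is orthogonality in the sense of~\eqref{eq:orthogonality}. I would compute the three inner products $\langle \basisfun_\ell, \basisfun_k \rangle = \ev{\basisfun_\ell \basisfun_k}$ for $\ell, k \in \{0,1\}$: we have $\langle \basisfun_0, \basisfun_0 \rangle = \ev{1} = 1 > 0$, then $\langle \basisfun_0, \basisfun_1 \rangle = \ev{\rv{x} - \ev{\rv{x}}} = \ev{\rv{x}} - \ev{\rv{x}} = 0$ by linearity of expectation, and finally $\langle \basisfun_1, \basisfun_1 \rangle = \ev{(\rv{x} - \ev{\rv{x}})^2} = \var{\rv{x}}$, which is strictly positive because $\rv{x}$ is assumed non-trivial (a degenerate constant random variable would make $\rv{x} - \ev{\rv{x}} \equiv 0$, collapsing the basis; the standing assumption that uncertainties are non-trivial continuous random variables rules this out). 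Hence $\ev{\basisfun_\ell \basisfun_k} = \gamma_\ell \delta_{\ell k}$ with $\gamma_0 = 1$, $\gamma_1 = \var{\rv{x}} > 0$, so $\{1, \rv{x} - \ev{\rv{x}}\}$ is a bona fide orthogonal basis of the two-dimensional subspace, consistent with~\eqref{eq:PCEdimension} for $n_\xi = 1$, $n_d = 1$.

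It then remains to check that the coefficients $[x_0, x_1]^\top = [\ev{\rv{x}}, 1]^\top$ reproduce $\rv{x}$ exactly. Either I plug into the projection formula~\eqref{eq:PCEcoeffs}: $x_0 = \langle \rv{x}, \basisfun_0 \rangle / \langle \basisfun_0, \basisfun_0 \rangle = \ev{\rv{x}}/1 = \ev{\rv{x}}$, and $x_1 = \langle \rv{x}, \basisfun_1 \rangle / \langle \basisfun_1, \basisfun_1 \rangle = \ev{\rv{x}(\rv{x} - \ev{\rv{x}})} / \var{\rv{x}} = \var{\rv{x}}/\var{\rv{x}} = 1$; or, more directly, I simply observe the algebraic identity $x_0 \basisfun_0 + x_1 \basisfun_1 = \ev{\rv{x}} \cdot 1 + 1 \cdot (\rv{x} - \ev{\rv{x}}) = \rv{x}$, so~\eqref{eq:PCE_NonCanonical} holds with equality, not merely in the truncation-limit sense. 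Since the expansion is finite ($L = 1$), exact, and of maximal degree one, $\rv{x}$ has an exact affine \pce by Definition~\ref{defi:ExactAffinePolynomialChaos}. I do not anticipate a genuine obstacle here; the only subtlety worth flagging explicitly is the non-degeneracy of $\rv{x}$ needed so that $\gamma_1 = \var{\rv{x}} > 0$ and the basis does not degenerate, and perhaps a remark that when $\rv{x}$ happens to be one of the canonical distributions this construction recovers (up to an affine reparameterization of the germ) the classical bases listed in Corollary~\ref{rema:ExactAffinePCE}.
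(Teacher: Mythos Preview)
Your proposal is correct and follows essentially the same approach as the paper: verify the identity~\eqref{eq:PCE_NonCanonical} by direct substitution and check orthogonality of $\{1,\rv{x}-\ev{\rv{x}}\}$ by computing the three inner products $\langle\basisfun_0,\basisfun_0\rangle=1$, $\langle\basisfun_0,\basisfun_1\rangle=0$, $\langle\basisfun_1,\basisfun_1\rangle=\var{\rv{x}}$. Your write-up is in fact more thorough than the paper's---you also explicitly verify the degree condition, recover the coefficients via~\eqref{eq:PCEcoeffs}, and flag the non-degeneracy requirement $\var{\rv{x}}>0$, which the paper leaves implicit.
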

\begin{proof}
	Direct inspection of~\eqref{eq:PCE_NonCanonical} with the given \pce coefficients shows that $\rv{x} = \rv{x}$.
	Orthogonality of the basis $\{ \basisfun_\ell \}_{\ell = 0}^1$ is shown by verifying the orthogonality condition~\eqref{eq:AGCParameterization},	
		\begin{align*}
		\langle \basisfun_0, \basisfun_0 \rangle 	&= \int_{} 1 \, \mathrm{d}\mathbb{P}(\tau) = 1 \\
		\langle \basisfun_0, \basisfun_1 \rangle 	&= \int_{} 1\,(\tau - \ev{\rv{x}_i}) \mathrm{d}\mathbb{P}(\tau)
		= \int_{} \, \tau \mathrm{d}\mathbb{P}(\tau) - \ev{\rv{x}} = 0 \\
		\langle \basisfun_1, \basisfun_1 \rangle 	&= \int_{} \, (\tau - \ev{\rv{x}_i})^2 \mathrm{d}\mathbb{P}(\tau) = \var{\rv{x}},
		\end{align*}
	which completes the proof.
\end{proof}
\begin{exmpl}
Consider a continuous random variable $\rv{x}$ with probability density $f_{\rv{x}}: [0,1] \rightarrow \mathbb{R}_{\geq}$, and $f_{\rv{x}}(x) = \pi/2 \, \sin( \pi x )$.
According to Proposition~\ref{propo:BeyondCanonicalUncertainty}, an orthogonal basis is then
\begin{subequations}
	\begin{align*}
	\basisfun_0 &= 1, \\
	\basisfun_1 &= \rv{x} - \ev{\rv{x}} = \rv{x} - \frac{\pi}{2}\,\int_0^1 \! \! \tau \, \sin (\pi \tau) \, \mathrm{d}\tau = \rv{x} - \frac{1}{2},
	\end{align*}
	with $\langle \basisfun_1, \basisfun_1 \rangle = \var{\rv{x}} = 1/4 - 2/\pi^2$.
	The \pce coefficients of $\rv{x}$ become $[x_0, x_1]^\top = [1/2, 1]^\top$.
	\hfill $\square$
\end{subequations}
\end{exmpl}

Both Corollary~\ref{rema:ExactAffinePCE} and Proposition~\ref{propo:BeyondCanonicalUncertainty} consider a single univariate uncertainty.
This may not suffice when modeling uncertainties for \ccopf problems, where multiple sources of uncertainty are present (for example solar, wind, demand).
However, the combination of several independent exact affine \pces can still be cast as a multivariate exact affine \pce.
\begin{propo}[Multivariate exact affine \pce]
	\label{propo:ExactAffinePCE_combination}
	Consider $n$ independent random variables with exact affine \pces $\rv{x}_i \in \mathcal{L}^2(\Omega_i, \mathbb{R})$ with respective \pces $\rv{x}_i = x_0^i + x_1^i \basisfun_1^i$ for $i = 1, \hdots, n$.
	Then, the Hilbert space $\mathcal{L}^2(\Omega_1 \times \cdots \times \Omega_n, \mathbb{R})$ w.r.t. probability measure $\mathbb{P} = \mathbb{P}_1 \cdots \mathbb{P}_n$ is spanned by the orthogonal basis
	\begin{equation}
	\{\basisfun_\ell \}_{\ell = 0}^L = \{1, \basisfun_1^1, \basisfun_1^2, \hdots, \basisfun_1^n  \}
	\end{equation}
	of dimension $L + 1 = n + 1$, cf. \eqref{eq:PCEdimension} with $\nxi = n$, and $n_d = 1$.
	Let $e_i$ be the $i$\textsuperscript{th} unit vector of $\mathbb{R}^n$, and let $\basisfun = [\basisfun_1, \hdots, \basisfun
	_n]^\top$.
	Then, the \pce
	\begin{equation}
	\rv{x}_i \cong x_0^i + e_i^\top \basisfun
	\end{equation}
	recovers the $i$\textsuperscript{th} random variable $\rv{x}_i$.
	\hfill $\square$
\end{propo}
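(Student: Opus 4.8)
The plan is to build the multivariate basis from the $n$ univariate bases by the standard tensor-product construction (cf.\ the footnote after~\eqref{eq:orthogonality}) and then to show that truncating this construction to total polynomial degree one returns exactly the claimed set, with orthogonality inherited from independence.

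First I would record what an exact affine \pce gives us per coordinate: for each $i$ the relevant univariate orthogonal basis on $(\Omega_i,\mathbb{P}_i)$ is $\{\basisfun_0^i,\basisfun_1^i\}=\{1,\basisfun_1^i\}$ with $\operatorname{deg}\basisfun_1^i=1$, $\langle \basisfun_0^i,\basisfun_1^i\rangle_i = \ev{\basisfun_1^i}=0$, and $\langle \basisfun_1^i,\basisfun_1^i\rangle_i=\var{\rv{x}_i}>0$ by nontriviality of $\rv{x}_i$. The tensor-product basis of $\mathcal{L}^2(\Omega_1\times\dots\times\Omega_n,\mathbb{R})$ is indexed by multi-indices $\mbf{k}=(k_1,\dots,k_n)$ and consists of the products $\prod_{i=1}^n \basisfun_{k_i}^i$. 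Imposing $n_d=1$, i.e.\ $k_1+\dots+k_n\le 1$, leaves only $\mbf{k}=0$, which gives $\prod_i 1 = 1$, and $\mbf{k}=e_i$, which gives $\basisfun_1^i\prod_{j\ne i}\basisfun_0^j=\basisfun_1^i$ since $\basisfun_0^j=1$. Hence the degree-$\le 1$ subspace is spanned by $\{1,\basisfun_1^1,\dots,\basisfun_1^n\}$, whose cardinality $n+1$ matches~\eqref{eq:PCEdimension} evaluated at $\nxi=n$, $n_d=1$.

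Next I would check orthogonality with respect to the product measure $\mathbb{P}=\mathbb{P}_1\cdots\mathbb{P}_n$ by factorizing expectations over independent coordinates: for $i\ne j$, $\langle \basisfun_1^i,\basisfun_1^j\rangle = \ev{\basisfun_1^i\basisfun_1^j}=\ev{\basisfun_1^i}\ev{\basisfun_1^j}=0$ because each $\ev{\basisfun_1^i}=\langle \basisfun_1^i,\basisfun_0^i\rangle_i=0$; similarly $\langle 1,\basisfun_1^i\rangle=\ev{\basisfun_1^i}=0$, while $\langle 1,1\rangle=1$ and $\langle \basisfun_1^i,\basisfun_1^i\rangle=\var{\rv{x}_i}>0$. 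Thus the set is orthogonal, hence linearly independent, and spans the asserted $(n+1)$-dimensional subspace. For the recovery claim I would observe that $\rv{x}_i$, viewed as a function on the product space, depends only on the $i$\textsuperscript{th} coordinate, so its \pce coefficient vector in the joint basis has $x_0^i$ in the constant slot, $x_1^i$ in the slot of $\basisfun_1^i$, and zeros elsewhere; writing $\basisfun=[\basisfun_1^1,\dots,\basisfun_1^n]^\top$ this reads $\rv{x}_i\cong x_0^i+x_1^i e_i^\top\basisfun$, which is the stated $\rv{x}_i\cong x_0^i+e_i^\top\basisfun$ under the normalization $x_1^i=1$ of Proposition~\ref{propo:BeyondCanonicalUncertainty}.

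There is no hard obstacle here; the proof is essentially bookkeeping plus one structural fact. The step requiring the most care is the degree-one truncation of the tensor basis — verifying that no multi-index with two or more nonzero entries survives and that the surviving products collapse to their single univariate factor because $\basisfun_0^j=1$ — together with the convention that ``the Hilbert space is spanned by'' is to be read as the subspace of multivariate polynomials of degree at most one, consistent with the subspace viewpoint already adopted in~\eqref{eq:PCEdimension}. The only genuinely substantive ingredient is that independence makes the inner product on the product space factor into univariate inner products, which is exactly what annihilates the cross terms.
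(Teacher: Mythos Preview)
Your proposal is correct and follows essentially the same approach as the paper: both rest on the Hilbert-space tensor product of the $\mathcal{L}^2(\Omega_i,\mathbb{R})$ and the factorization of inner products under independence. The paper's proof is a single sentence invoking that isomorphism with a reference, whereas you spell out the tensor-basis construction, the degree-one truncation, and the orthogonality check explicitly; your remark on the implicit normalization $x_1^i=1$ in the recovery formula is a useful clarification the paper leaves tacit.
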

\begin{proof}
	The assertion follows from the fact that the space over the product probability space $\mathcal{L}^2(\Omega_1 \times \cdots \times \Omega_n, \mathbb{R})$ is isomorphic to the Hilbert space tensor product of the Hilbert spaces  $\mathcal{L}^2(\Omega_i, \mathbb{R})$, see \cite{Sullivan15book}.
\end{proof}

\begin{exmpl}
	\label{exmpl:MultiGaussian}
	We illustrate how \pce applies to the multivariate Gaussian random variables from Setting \ref{ass:Roald}:
	The stochastic germ $\rv{\xi} \sim \normaldist(0,I_{\nbus})$ is $\mathbb{R}^{\nbus}$-valued and standard Gaussian.
	According to Corollary~\ref{rema:ExactAffinePCE} every univariate $\xi_i$ requires a Hermite polynomial basis $\{ H\!e_\ell^i \}_{\ell=0}^1 =  \{1, \xi_i \}$ of degree at most $n_d=1$  for all $i = 1, \hdots, \nxi$ with $\nxi=\nbus$.
	The tensorized \pce-basis from Proposition~\ref{propo:ExactAffinePCE_combination} is the $\nbus$-variate Hermite polynomial basis $\{ \basisfun_\ell \}_{\ell = 0}^{L} =  \{1, \xi_1, \hdots, \xi_{\nbus} \}$ of dimension $L + 1 = \nbus + 1$.
	Orthogonality holds w.r.t. to the multivariate Gaussian measure, i.e. for all $i,j = 0, \hdots, N$
	\begin{align*}
	\label{eq:MultivariateGaussianOrthogonality}
	\langle \basisfun_i, \basisfun_j \rangle = \left(\sqrt{2 \pi}\right)^{-N} \! \! \int_{}  \basisfun_i(\tau) \basisfun_j(\tau) \, \mathrm{e}^{- \frac{\tau^\top \tau}{2}} \mathrm{d} \tau = 
	\begin{cases}
	0, \, i \neq j\\
	1, \, i = j.
	\end{cases}
	\end{align*}
	The \pce \eqref{eq:PCE_complete} for the uncertain power demand \eqref{eq:ForecastUncertainty} from Setting \ref{ass:Roald} is then exact and affine
	\begin{equation*}
	\label{eq:PCEparameterization_fixed}
	\rv{\pfix} = \hat{\rv{\pfix}} =  d_0 + \varsqrt{\pfix} \xi = \sum_{\ell = 0}^{L} \pfix_\ell \basisfun_\ell =  d_0 + \pfixup \basisfunVector
	\end{equation*}
	with $\basisfun = \xi = [\xi_1 ~ \hdots ~ \xi_{\nbus}]^\top$.
	The $\mathbb{R}^{\nbus}$-valued \pce coefficients $\pfix_\ell$ for $\ell = 1, \hdots, \nbus$ correspond to the columns of $\varsqrt{\pfix}$.
	Similarly, the \pce for the feedback policy \eqref{eq:AGCParameterization} from Setting~\ref{ass:Roald} is exact and affine
	\begin{equation*}
	\label{eq:PCEparameterization_gen}
	\rv{\pvar} =  \pvar_0 -\alpha \bs{1}_{\nbus}^\top \varsqrt{\pfix} \xi = \sum_{\ell = 0}^{L} \pvar_\ell \basisfun_\ell
	= \pvar_0 + \pvarup \basisfunVector,
	\end{equation*}
	where the real-valued \pce coefficients $\pvar_\ell$ for $\ell = 1, \hdots, \nbus$ correspond to the columns of $-\alpha \bs{1}_{\nbus}^\top \varsqrt{\pfix}$.
	\hfill $\square$
\end{exmpl}

\section{Stochastic \opf in Three Steps}
\label{sec:ThreeSteps}
Recall that Section~\ref{sec:Random-variable-opf} concluded that \ccopf according to Problem~\eqref{eq:DC_OPF_uncertainQP} provides optimal affine feedback policies irrespective of the probability distribution of~$\rv{\pfix}$.
On the other hand, Example~\ref{exmpl:MultiGaussian} from Section~\ref{sec:PCE} analyzes Setting~\ref{ass:Roald} for \ccopf from the point of view of \pce.
The result is that the affine feedback from Setting \ref{ass:Roald} is equivalent to a \pce, and the solution to Problem~\eqref{eq:DC_OPF_Line} provides the optimal \pce coefficients from \eqref{eq:PCEparameterization_gen}.
In the following, both observations will be connected and it will be shown that the \ccopf Problem~\eqref{eq:DC_OPF_uncertainQP} can be reformulated as an optimization problem in terms of \pce coefficents of optimal feedback policies.
More precisely, the following three-step methodology is proposed:
\begin{mysteplist}
	\item \label{item:Step_Formulation} Formulate a random-variable \opf problem.
	\item 	\label{item:Step_Parameterization} Introduce an affine, viable feedback.
	\item 	\label{item:Step_Optimization} Determine the values of the feedback parameters by means of a suitable optimization problem.
\end{mysteplist}
We apply the three-step methodology exemplarily and demonstrate the role of \pce in it, \till{more specifically the role of exact affine \pce as introduced in Section~\ref{sec:ExactAffinePCE}}.
That is, from this point on we associate with \ref{item:Step_Formulation} the specific \ccopf problem
\begin{subequations} 
	\label{eq:DCsOPF_individual}
	\begin{align}
	\underset{\rv{\pvar} \in \Ltwospacern{\ngen}}{\operatorname{min}}~  & \mathrm{E}[J(\rv{\pvar})] 
	\label{eq:DCsOPF_CostFunction}\\
	\mathrm{s.\,t.} ~ ~ \,
	\label{eq:DCsOPF_PowerBalance}	
	& \bs{1}_{\ngen}^\top (\rv{\pvar } + \rv{\pfix}) = 0,\\
	& \text{\eqref{eq:ChanceConstraintFormulations_ind}}.
	\label{eq:DCsOPF_ChanceConstraintFormulation}
	\end{align}
\end{subequations}
Problem~\eqref{eq:DCsOPF_individual} is studied for the following setting:
\begin{sett}[Quadr. cost, exact affine \pce \cite{Muehlpfordt17a}]
	\label{ass:stochdemandPCE}
	\mbox{}
	\begin{enumerate}
		\setlength{\itemsep}{0pt}
		\item The deterministic cost $J(\pvar) = \pvar^\top H \pvar + h^\top \pvar$ is quadratic and positive (semi-)definite in $\pvar$.
		\item The uncertainty $\rv{\pfix}$ admits an exact affine \pce
		\begin{subequations}
			\begin{equation}
			\label{eq:ExactPCE_fixedpower}
			\rv{\pfix} = \sum_{\ell=0}^{L} \pfix_\ell \basisfun_\ell = \pfix_0 + \pfixup \basisfunVector,
			\end{equation}
			and the \pce coefficients $\pfix_\ell \in \mathbb{R}^{\nbus}$ are known w.r.t. the polynomial basis $\{\basisfun_\ell\}_{\ell=0}^{L}$.
			\item The feedback policy $\rv{\pvar}$ is given by its finite \pce 
			\begin{equation}
			\begin{aligned}
			\label{eq:ExactPCE_gen}
			\rv{\pvar}  &= \sum_{\ell = 0}^{L} \pvar_{\ell} \basisfun_\ell = \pvar_0 + \pvarup \basisfunVector,\\
			\bs{1}_{\nbus}^\top (\pvar_\ell + \pfix_\ell) &= 0, \quad \ell = 0, 1, \hdots, L,
			\end{aligned}
			\end{equation}
		\end{subequations}
		where the \pce coefficients $\pvar_{\ell} \in \mathbb{R}^{\ngen}$ are decision variables 
		w.r.t. the polynomial basis $\{\basisfun_\ell\}_{\ell=0}^{L}$.
		\item The individual chance constraints \eqref{eq:ChanceConstraintFormulations_ind} are rewritten using the first and second moment, cf. Setting~\ref{ass:Roald}.
		\hfill $\square$
	\end{enumerate}
\end{sett}
The main assumption in Setting~\ref{ass:stochdemandPCE} is finiteness and exactness of the \pce for the uncertainty $\rv{\pfix}$---which is no severe restriction, cf. Section~\ref{sec:PCE}.\footnote{Note that Setting \ref{ass:stochdemandPCE} contains Setting \ref{ass:Roald}: quadratic costs generalize linear costs; Gaussian random variables admit an exact affine \pce, cf. \eqref{eq:PCEparameterization_fixed} and \eqref{eq:PCEparameterization_gen}; the feedback parameterization is affine and viable; and the inequality constraint modeling is equivalent.}
Setting~\ref{ass:stochdemandPCE} also addresses  \ref{item:Step_Parameterization}: it contains an affine feedback parameterization.
It remains to show that this feedback parameterization is indeed viable.
\begin{thm}[Viable feedback control via \pce]
	\label{propo:PCEgeneralizesAGC}
	Let the uncertain power demand $\rv{\pfix}$ admit an exact affine \pce \eqref{eq:ExactPCE_fixedpower}.
	Then, any viable feedback control \eqref{eq:ExactPCE_gen} admits an exact affine \pce.
	\hfill $\square$
\end{thm}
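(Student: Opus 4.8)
The plan is to generalize, in the spirit of the proof of Proposition~\ref{propo:SpecialCase}, the coefficient-matching argument there from the Hermite basis to an arbitrary exact affine basis. First I would fix notation: by hypothesis the uncertainty has the exact affine expansion $\rv{\pfix} = \pfix_0 + \pfixup\basisfunVector$ of~\eqref{eq:ExactPCE_fixedpower} in an orthogonal basis $\{\basisfun_\ell\}_{\ell=0}^{L}$ with $\max_{\ell=0,\dots,L}\operatorname{deg}\basisfun_\ell = 1$, cf.\ Definition~\ref{defi:ExactAffinePolynomialChaos}, and the feedback under consideration is written through its finite \pce $\rv{\pvar} = \pvar_0 + \pvarup\basisfunVector$ in that same basis, as in~\eqref{eq:ExactPCE_gen}.

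Next I would substitute both expansions into the random-variable power balance~\eqref{eq:DCsOPF_PowerBalance}, collect terms basis-function-wise using $\basisfun_0 = 1$, and obtain
\begin{equation*}
\bs{1}_{\nbus}^\top(\rv{\pvar} + \rv{\pfix}) = \sum_{\ell=0}^{L} \bs{1}_{\nbus}^\top(\pvar_\ell + \pfix_\ell)\,\basisfun_\ell ,
\end{equation*}
which is itself the \pce of the scalar random variable $\bs{1}_{\nbus}^\top(\rv{\pvar}+\rv{\pfix})$. The step that carries the actual content is the claim that this random variable is the zero element of $\mathcal{L}^2$ --- i.e.\ the policy is viable in the sense of Definition~\ref{defi:viability} --- if and only if all of its \pce coefficients vanish, that is $\bs{1}_{\nbus}^\top(\pvar_\ell + \pfix_\ell) = 0$ for $\ell = 0,\dots,L$. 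This is the uniqueness of polynomial chaos coefficients: it follows by projecting onto each $\basisfun_k$ and invoking the orthogonality relation~\eqref{eq:orthogonality}, $\langle \basisfun_\ell,\basisfun_k\rangle = \gamma_\ell\delta_{\ell k}$ with $\gamma_\ell > 0$, so that the $\basisfun_\ell$ are linearly independent in $\mathcal{L}^2$. Hence the coefficient conditions written in~\eqref{eq:ExactPCE_gen} are both necessary and sufficient for viability, which also discharges the remark preceding the theorem that this parameterization still has to be shown viable.

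Finally, exact affineness of $\rv{\pvar}$ is then immediate and needs no further work: $\rv{\pvar}$ is the finite sum $\sum_{\ell=0}^{L}\pvar_\ell\basisfun_\ell$ with \emph{no} truncation error, in a basis whose maximal degree is one, so each component satisfies Definition~\ref{defi:ExactAffinePolynomialChaos} verbatim. I would close by noting that this is the non-Gaussian counterpart of Remark~\ref{rema:Gaussianity}: linearity of the \dc power flow map sends an exact affine uncertainty to an exact affine control, so the \agc/participation-factor structure is recovered for arbitrary finite-variance uncertainties. The only delicate point is the ``if and only if'' in the middle paragraph --- one must argue via linear independence of the $\basisfun_\ell$ in $\mathcal{L}^2$, which is precisely where~\eqref{eq:orthogonality} enters, rather than by a pointwise argument in the germ $\rv{\xi}$; everything else is substitution and bookkeeping.
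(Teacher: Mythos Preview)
Your proof is correct and follows essentially the same approach as the paper: substitute the exact affine expansions into the power balance, then use orthogonality of the basis~\eqref{eq:orthogonality} to reduce the random-variable equality to the coefficient-wise conditions $\bs{1}_{\nbus}^\top(\pvar_\ell + \pfix_\ell) = 0$ for $\ell = 0,\dots,L$. The paper additionally frames this via the finite-dimensional subspace $\mathcal{M} = \operatorname{span}\{\basisfun_\ell\}_{\ell=0}^L \subseteq \mathcal{L}^2(\Omega,\mathbb{R})$ to argue that $\rv{\pvar}_i \in \mathcal{M}$ is necessary and sufficient, but the projection step and the conclusion are identical to yours.
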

\begin{proof}
	Viability of the feedback control \eqref{eq:ExactPCE_gen} follows from linearity of the random-variable \dc power flow \eqref{eq:RandomVariablePowerBalanceOPF}.
	Under Setting \ref{ass:stochdemandPCE} Equation \eqref{eq:RandomVariablePowerBalanceOPF} becomes
	\begin{equation}
	\label{eq:MinimalParameterization}
	\bs{1}_{\nbus}^\top ( \rv{\pfix}  + \rv{\pvar}) = \sum_{\ell=0}^{L} \bs{1}_{\nbus}^\top \pfix_\ell \basisfun_\ell  + \bs{1}_{\nbus}^\top \rv{\pvar} = 0.
	\end{equation}
	The basis $\{ \basisfun_\ell \}_{\ell=0}^{L}$---which is at most affine by assumption---spans the $(L{+}1)$-dimensional subspace $\mathcal{M} =  \operatorname{span} \{ \basisfun_\ell \}_{\ell=0}^{L} \subseteq \mathcal{L}^2(\Omega, \mathbb{R})$.\footnote{In principle any basis $\mathcal{B}$ spanning $\mathcal{M}$ is possible.
		The choice of the basis $\{\basisfun_\ell\}_{\ell=0}^{L}$ is natural, because orthogonality is computationally advantageous as it allows the deterministic reformulation \eqref{eq:DetReformulation} of the random-variable power flow equations.
		It further yields $\mathcal{L}^2$-optimality.
		Furthermore, applying Gram-Schmidt orthogonalization to $\mathcal{B}$ leads to the basis $\{\basisfun_\ell\}_{\ell=0}^{L}$.}
	All entries $\rv{\pfix}_i$ of $\rv{\pfix}$ are elements of~$\mathcal{M}$.
	To attain a feasible feedback control for \eqref{eq:MinimalParameterization} it is necessary and sufficient to choose $\rv{\pvar}_i \in \mathcal{M}$; hence $\rv{\pvar}$ is exact affine.
	To attain zero in \eqref{eq:MinimalParameterization}, which corresponds to viability of the feedback control, $\sum_{\ell = 0}^{L} \bs{1}_{\nbus}^\top (\pfix_\ell + \pvar_{\ell}) \, \basisfun_\ell = 0$ must hold.
	Project onto the non-zero basis functions $\basisfun_\ell$, and exploit orthogonality
	\begin{equation}
	\label{eq:DetReformulation}
	\bs{1}_{\nbus}^\top (\pfix_\ell + \pvar_{\ell}) \,\langle  \basisfun_\ell,  \basisfun_\ell \rangle = 0, \quad \ell = 0, 1, \hdots, L,
	\end{equation}
	which holds if and only if $\bs{1}_{\nbus}^\top (\pfix_\ell + \pvar_{\ell}) = 0$, because $\langle  \basisfun_\ell,  \basisfun_\ell \rangle = \| \basisfun_\ell \|^2 $ is positive.
\end{proof}
In short, Theorem~\ref{propo:PCEgeneralizesAGC} shows that affine policies are viable in case of exact affine \pces for the uncertainty $\rv{\pfix}$.
Importantly, viability of the affine feedback is a consequence only of linearity of the random-variable \dc power flow.

Finally, \ref{item:Step_Optimization} needs to  be addressed, i.e. how to compute the \pce coefficients $\pvar_\ell$ tractably and efficiently.
To this end, properties of \pce introduced in Section~\ref{sec:PCE} are exploited to reformulate the cost function, the equality constraints, and the inequality constraints from Problem~\eqref{eq:DCsOPF_individual} in terms of \pce coefficients.

\subsubsection*{Cost Function}
Under Setting~\ref{ass:stochdemandPCE} the cost function from Problem~\eqref{eq:DCsOPF_individual} reads
$
\mathrm{E}[J(\rv{\pvar})] = \mathrm{E} [ \rv{\pvar}^\top H \rv{\pvar} + h^\top \rv{\pvar} ]
$ for positive definite~$H$.
Exploiting orthogonality of the \pce basis, the cost function becomes \cite{Muehlpfordt17a}
\begin{subequations}
\label{eq:Ingredients_Rewrite_PCEDCOPF}
\begin{align}
\label{eq:EVcost_PCE}
\ev{J(\rv{\pvar})} & = J(\ev{\rv{\pvar}}) + \sum_{\ell = 1}^{L} \gamma_\ell \, \pvar_\ell^\top H \pvar_\ell,
\end{align}
where the positive constant $\gamma_\ell$ is computed offline according to~\eqref{eq:orthogonality}.
The cost \eqref{eq:EVcost_PCE} is equal to the sum of the cost of the expected value $J(\ev{\rv{\pvar}}) = J(\pvar_0)$ and the cost of uncertainty.
The cost of uncertainty has two origins: the monetary quadratic costs $H$, and the genuine cost of uncertainty due to uncertainty encoded by higher-order coefficients $\pvar_\ell$ for $\ell = 1, \hdots, L$.

\subsubsection*{Equality Constraints} 
Power balance~\eqref{eq:DCsOPF_PowerBalance} in terms of random variables holds iff the power balance holds in terms of the \pce coefficients,
\begin{equation}
\bs{1}_N^\top (\pfix_\ell + \pvar_\ell) = 0, \quad \ell = 0, \hdots, L.
\end{equation}
This has been proved in Theorem~\ref{propo:PCEgeneralizesAGC}, and is contained in \eqref{eq:ExactPCE_gen} from Setting~\ref{ass:stochdemandPCE}.

\subsubsection*{Inequality Constraints}
Moments of a random variable can be rewritten in terms of \pce coefficients, cf. \eqref{eq:PCE_statistics}.
For the control input $\rv{\pvar}$ this leads to
\begin{align}
\mathrm{E}[\rv{\pvar}_i] + \beta_{\pvar} \sqrt{\mathrm{Var} [\rv{\pvar}_i]} = 
\rv{u}_{i,0} + \beta_{\pvar} \sqrt{ \sum\nolimits_{\ell = 1}^L \gamma_\ell u_{i,\ell}^2 }\,,
\end{align}
\end{subequations}
where $u_{i,\ell}$ is the $\ell$-th \pce coefficient of the control input at bus~$i$.
The same procedure applies to the uncertain line flows $\rv{p}_{l,j}$ for lines $l \in \mathcal{N}_l$.

\subsubsection*{Optimization Problem}
Combining the results~\eqref{eq:Ingredients_Rewrite_PCEDCOPF}, \ref{item:Step_Optimization} leads to the following convex program \cite{Muehlpfordt17a}

\begin{subequations}
	\label{eq:DC_OPF_SOCP}
	\begin{align}
	\underset{\pvar_{\ell} \in \mathbb{R}^{\nbus} \forall \ell = 0, \hdots, L}{\operatorname{min}}\quad  & J(\pvar_0) +  \sum_{\ell = 1}^{L}  \gamma_\ell \, \pvar_\ell^\top H \pvar_\ell\\
	\label{eq:DCSOPF_EnergyBalance_PCE}
	\mathrm{s.\,t.} ~ ~ \,
	& \bs{1}_N^\top (\pfix_\ell + \pvar_\ell) = 0, \quad \ell = 0, \hdots, L, \\
	& \underline{\pvar}_i \leq \pvar_{i,0} \pm \beta_{\pvar} \, \sqrt{\sum\nolimits_{\ell=1}^L \gamma_\ell \, \pvar_{i,\ell}^2} \leq \overline{\pvar}_i, \\
	& \underline{p}_{l,j} \leq  p_{l,j,0} \pm \beta_{l} \, \sqrt{ \sum\nolimits_{\ell=1}^L \gamma_\ell \, p_{l,j,\ell}^2} \leq \overline{p}_{l,j}, \\
	\nonumber & \forall i \in \mathcal{\nbus}, ~ \forall j \in \mathcal{N}_l.
	\end{align}
\end{subequations}
Problem~\eqref{eq:DC_OPF_SOCP} is a convex second-order cone program (\socp) where the \pce coefficients of the control inputs are decision variables.
The optimal policy is obtained from the optimal solution $u_{\ell}^\star$ of \socp~\eqref{eq:DC_OPF_SOCP} as follows
\begin{equation}
\label{eq:RVSolution}
\rv{\pvar}^\star  = \sum_{\ell=0}^{L} \pvar_\ell^\star \basisfun_\ell = \pvar_0^\star + \pvarup^\star \basisfun \in  \Ltwospacern{\ngen}.
\end{equation}
Given a particular realization $\tilde{\pfix}$ of the uncertain demand, there exists a corresponding realization $\tilde{\xi}$ of the stochastic germ, i.e. $\tilde{\pfix} = \rv{\pfix}(\tilde{\xi})$.
This, in turn, results in a realization of the feedback policy~\eqref{eq:RVSolution}, namely the control input
\begin{equation}
\label{eq:RVRealization}
\tilde{\pvar}^\star = \rv{\pvar}^\star(\tilde{\xi}) = \sum_{\ell=0}^{L} \pvar_\ell^\star \basisfun_\ell(\tilde{\xi}) = \pvar_0^\star + \pvarup^\star \basisfun(\tilde{\xi}) \in \mathbb{R}^{\ngen}.
\end{equation}
The control action \eqref{eq:RVRealization} satisfies the \dc power flow equations by construction.
Similarly, the probability for violations of generation limits and/or line limits is accounted for.
\begin{rema}[Realization $\tilde{\pfix}$]
	In practice it is the $N$-valued realization $\tilde{\pfix}$ of the power demand that is accessible, from which the $\nxi$-valued realization $\tilde{\xi}$ of the stochastic germ has to be computed.
	This is straightforward for multivariate exact affine uncertainties for which $\nxi = L$, see Proposition~\ref{propo:ExactAffinePCE_combination}.
	All basis polynomials of degree one can be concisely written as an $\mathbb{R}^{L}$-valued affine function ${\basisfunVector} = [\basisfun_1, \hdots,  \basisfun_L]^\top =
	a + B \xi$ with $a \in \mathbb{R}^L$ and nonsingular $B \in \mathbb{R}^{L \times L}$.
	Rearranging \eqref{eq:ExactPCE_fixedpower} leads to
	\begin{equation}
	\begin{aligned}
	\label{eq:LGS}
	\pfixup B \tilde{\xi} &= \tilde{\pfix} - \pfix_0 - \pfixup a.
	\end{aligned}
	\end{equation}
	A mild assumption is $\nbus \geq \nxi = L$, i.e. there are as many or more buses in the network than there are modeled sources of uncertainties.
	If the matrix $DB$ has full column rank, then the system of linear equations \eqref{eq:LGS} admits a unique solution by construction, because the realization $\tilde{\xi}$ must be in the range of the rectangular matrix $\pfixup B \in \mathbb{R}^{\nbus \times L}$.
	\hfill $\square$
\end{rema}
\subsubsection*{Numerical Scalability}
\label{sec:NumericalScalability}
The \socp \eqref{eq:DC_OPF_SOCP} is a tractable convex reformulation of the \ccopf Problem~\eqref{eq:DCsOPF_individual}, and it exhibits structural equivalence: the cost function remains quadratic and positive definite, the random-variable \dc power flow remains linear. Also, the reformulated chance constraints are second-order cone constraints.
Compared to a standard deterministic \dcopf Problem~\eqref{eq:DCOPF} with $\ngen$ decision variables $\pvar_i$ for $i \in \mathcal{\nbus}$, the \socp~\eqref{eq:DC_OPF_SOCP} has $\ngen (L+1)$ decision variables $\pvar_{i,\ell}$ for $i \in \mathcal{\nbus}$ and $\ell = 0, \hdots, L$; for every bus $i$ the $(L+1)$ \pce coefficients have to be computed.
In order to solve Problem~\eqref{eq:DC_OPF_SOCP}, the positive numbers $\gamma_\ell$ have to be computed.
This can be done offline, for example via Gauss quadrature \cite{Sullivan15book}.

In general, the \pce dimension $(L+1)$ grows rapidly with the number of sources of uncertainty and the required univariate basis dimensions, see~\eqref{eq:PCEdimension}.
However, problems with many sources of uncertainty are intrinsically complex, hence are expected to be challenging both conceptually and numerically.
If exact affine uncertainties are used to model uncertainties, then the required dimension of the univariate bases reduces significantly.
For example, if every bus is modeled by a distinct exact affine uncertainty, then the number of decision variables of the \socp \eqref{eq:DC_OPF_SOCP} becomes $N(L+1) = N(N+1)$.

\begin{figure}
	\centering
	\includegraphics[]{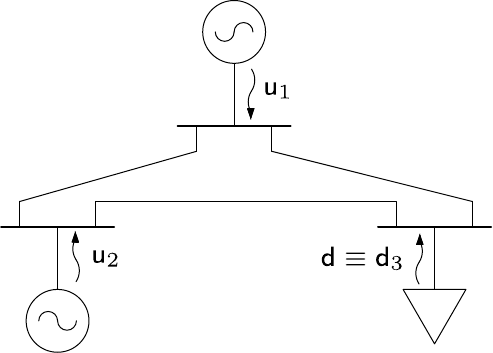}
	\caption{Three-bus system with two generators and one uncertain load.}
	\label{fig:ThreeBus}
	\vspace{\adjustlength}
\end{figure}

\begin{rema}[Local balancing via \pce]
	\pce naturally yields local balancing policies \cite{Roald16a}, because every bus can react to all uncertainties individually, hence especially to its local source of uncertainty.
	Local imbalances can be accounted for locally, thus avoiding network congestions.
	Instead, the \agc feedback from Setting \ref{ass:Roald} is a global balancing policy: the \agc coefficient $\alpha_j$ reacts to the sum of all fluctuations.
	This is motivated by current practice, where the sum of all fluctuations may be the only available broadcast signal by the \textsc{tso}.
	Then, however, local imbalances are, potentially, accounted for globally, thus possibly creating network congestions.
	Importantly, the \pce coefficients can be constrained to also yield a global balancing policy by enforcing equality of all non-zero-order \pce coefficients.
	\hfill $\square$
\end{rema}

\section{Case Studies}
\till{%
This section demonstrates the proposed method to \ccopf for two test systems.
Section~\ref{sec:Example_3Bus} considers a 3-bus example in a tutorial style: the system is simple enough to provide an analytical solution, hence allows for insightful comparisons; canonical and non-canonical uncertainties are considered.
Section~\ref{sec:Example_300Bus} provides a concise study of the 300-bus test case.
All units are given in per-unit values.}
\subsection{Tutorial 3-Bus Example}
\label{sec:Example_3Bus}
Consider the grid depicted in Figure \ref{fig:ThreeBus}: a connected 3-bus network where buses~1 and~2 each have generators but zero power demand, and bus~3 has no generator but non-zero stochastic power demand.
With slight abuse of notation set $\rv{\pvar} = [\rv{\pvar}_1, \rv{\pvar}_2]^\top \in \Ltwospacern{2}$ and $\rv{\pfix} \equiv \rv{\pfix}_3 \in \Ltwospacern{}$.
\subsubsection{Beta Distribution}
\label{sec:Beta_distribution}
\begin{figure}
	\centering
	\subfloat[Beta distribution.\label{fig:CaseBeta_demand}]{\includegraphics{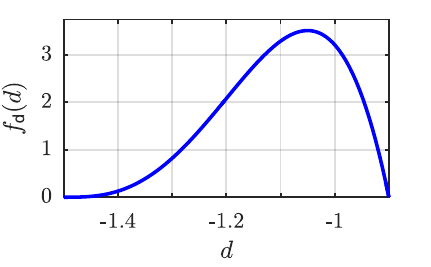}}
	\subfloat[Sinusoidal distribution.\label{fig:CaseSinusoidal_demand}]{\includegraphics{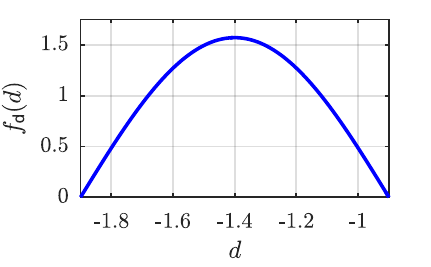}}	
	\caption{Probability density of uncertain demand $\rv{\pfix}$.}
	\vspace{\adjustlength}
\end{figure}
We consider \ccopf under Setting~\ref{ass:stochdemandPCE} with the following data:
The separable quadratic cost has parameters $H = \operatorname{diag}(0.2, 0.2)$, $h = [\hentry_1, \hentry_2]^\top = [0.5,0.6]^\top$.
The uncertain power demand~$\rv{\pfix}$ is modeled as a Beta distribution with support $[\unaryminus 1.5, \unaryminus 0.9]$ and shape parameters $a=4$, $b=2$.
According to Corollary~\ref{rema:ExactAffinePCE} it admits an exact affine \pce $\rv{\pfix} = \sum_{\ell = 0}^1 \pfix_\ell \basisfun_\ell$ w.r.t. the stochastic germ $\xi \sim \betadist(a, b)$ in a Jacobi polynomial basis $\{ \basisfun_\ell \}_{\ell = 0}^{1}  = \{ P_{\ell}^{(b \unaryminus 1,a \unaryminus 1)}(2\xi-1) \}_{\ell = 0}^{1} = \{ 1, 6\xi -4 \}$ with \pce coefficients $[\pfix_0, \pfix_1]^\top = [ \unaryminus 1.1, 0.1 ]^\top$,
\begin{equation}
\label{eq:Example_Germ2Demand}
\rv{\pfix} = \pfix_0 + \pfix_1 \basisfun_1 = \unaryminus 1.5 + 0.6 \xi \quad \Longleftrightarrow \quad \xi = \frac{\rv{\pfix} + 1.5}{0.6}.
\end{equation}
Figure \ref{fig:CaseBeta_demand} shows the skewed probability density function (\pdf) $\pdffun_{\rv{\pfix}}: [\unaryminus 1.5, \unaryminus 0.9] \rightarrow \mathbb{R}_{\geq 0}$ of the demand $\rv{\pfix}$.
Furthermore, an upper generation limit $\overline{\pvar}_1 = 0.85 $ for the generator at bus~1 is considered via the chance constraint reformulation 
\begin{equation}
\label{eq:CCFormulation_Example}
\ev{\rv{\pvar}_1} + \beta_{\pvar} \sqrt{\var{\rv{\pvar}_1}} \leq \overline{\pvar}_1 \quad \Rightarrow \quad \mathbb{P}(\rv{\pvar}_1 \leq \overline{\pvar}_1) \geq 1 - \varepsilon.
\end{equation}
The chance constraint parameter is $\beta_u = \sqrt{(1-\varepsilon)/\varepsilon}$ for a risk level $\varepsilon \in  \{ 0.05, 0.10 \}$, which is a distributionally robust formulation \cite{Calafiore2006}.
From Theorem~\ref{propo:PCEgeneralizesAGC} it is known that any viable feedback policy has to be exact affine w.r.t. the Jacobi polynomial basis, i.e. $\rv{\pvar} = \sum_{\ell = 0}^1 \pvar_{\ell} \basisfun_\ell$.
The \socp~\eqref{eq:DC_OPF_SOCP} for this setup becomes
\begin{subequations}
	\label{eq:DC_sOPFExample}
	\begin{align}
	\underset{\pvar_0, \pvar_1 \in \mathbb{R}^{2}}{\operatorname{min}}\quad  & {\frac{1}{2}\, \pvar_0^\top  H \pvar_0 + h^\top \! \pvar_0} + \frac{\gamma_1}{2}\,   \pvar_1^\top H \pvar_1\\
	\mathrm{s.\,t.} ~ ~ \,
	& \pfix_\ell + \bs{1}_2^\top \pvar_\ell = 0, \quad \ell = 0, 1 \\
	& \pvar_{1,0} + \beta_{\pvar} \, \sqrt{ \gamma_1 \, \pvar_{1,1}^2} \leq \overline{\pvar}_1, 
	\end{align}
\end{subequations}
with $\gamma_1 = \langle \basisfun_1, \basisfun_1 \rangle = \frac{2}{35 \, B(4,2)}$, where $B(\cdot,\cdot)$ is the Beta function.
Having solved~\eqref{eq:DC_sOPFExample}, the optimal \pce becomes $\rv{\pvar}^\star = \pvar_0^\star + \pvar_1^\star \basisfun_1$ in terms of the stochastic germ~$\xi$; see Table~\ref{tab:CaseNew} for numerical values of the optimal \pce coefficients.
For practical considerations the optimal feedback policy in terms of the uncertain demand~$\rv{\pfix}$ is of interest.
Using~\eqref{eq:Example_Germ2Demand}, this yields the following policies
\begin{align}
	\label{eq:Case_beta_policies}
\rv{\pvar}^\star(\rv{\pfix})
= \begin{bmatrix}
\rv{\pvar}_1(\rv{\pfix}) \\
\rv{\pvar}_2(\rv{\pfix})
\end{bmatrix}
=
\begin{cases}
\,
\begin{bmatrix}
\phantom{\unaryminus}0.6513 \\
\unaryminus 0.6513
\end{bmatrix}
-
\begin{bmatrix}
0.1270\\
0.8730
\end{bmatrix}
\,
\rv{\pfix},
& \varepsilon = 0.05, \\
\,
\begin{bmatrix}
\phantom{\unaryminus} 0.58 \\
\unaryminus 0.58
\end{bmatrix}
-
\begin{bmatrix}
0.19 \\
0.81
\end{bmatrix}
\,
\rv{\pfix},
& \varepsilon = 0.10,
\end{cases}
\end{align}
which satisfy the power balance, e.g. for $\varepsilon = 0.05$
\begin{equation}
\rv{\pfix} + \bs{1}_2^\top \rv{\pvar}^\star = \rv{\pfix} + \bs{1}_2^\top \left(
\begin{bmatrix}
\phantom{\unaryminus}0.6513 \\
\unaryminus 0.6513
\end{bmatrix} - 
\begin{bmatrix}
0.127 \\ 0.873
\end{bmatrix}\, \rv{\pfix}
\right)
= 0.
\end{equation}
\begin{figure*}
	\centering
	\subfloat[Optimal policies.\label{fig:Beta_policies}]{\includegraphics{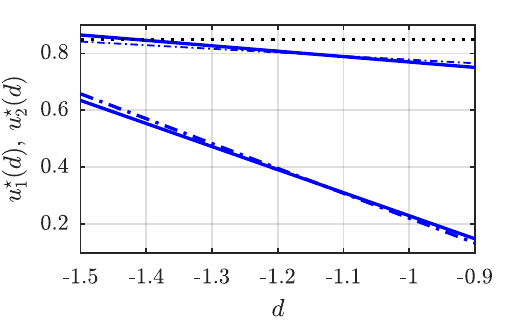}}	%
	\subfloat[Probability density of generator 1.\label{fig:Beta_PDF_1}]{\includegraphics{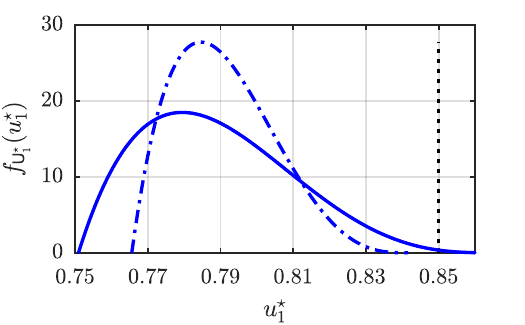}}	%
	\subfloat[Probability density of generator 1.\label{fig:Beta_PDF_2}]{\includegraphics{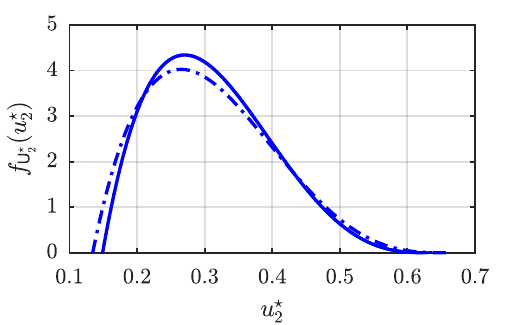}}	%
	\caption{Demand as Beta distribution---Results for security levels $\varepsilon = 0.05$ (dash-dotted), and  $\varepsilon = 0.10$ (solid). Upper generation limit shown dotted.\label{fig:NewCase}}
	\subfloat[Optimal policies.\label{fig:Sin_policies}]{\includegraphics{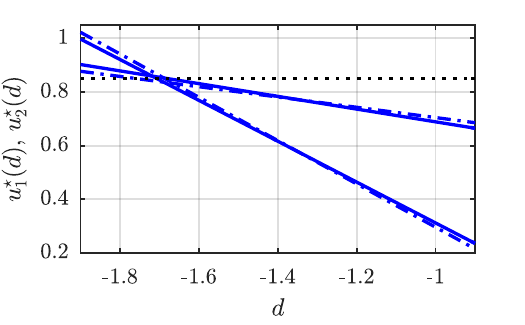}}	%
	\subfloat[Probability density of generator 1.\label{fig:Sin_PDF_1}]{\includegraphics{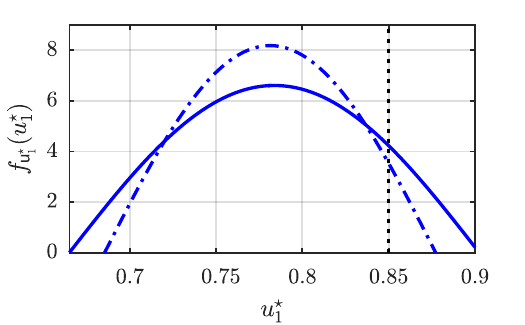}}	%
	\subfloat[Probability density of generator 1.\label{fig:Sin_PDF_2}]{\includegraphics{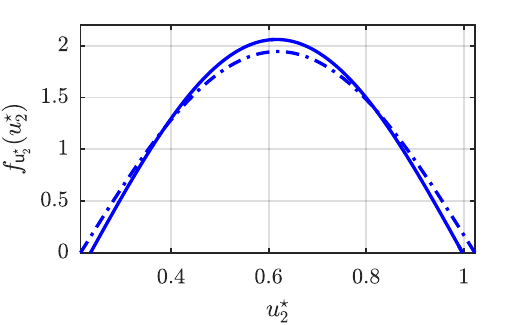}}	%
	\caption{Demand as sinusoidal distribution---Results for security levels $\varepsilon = 0.05$ (dash-dotted), and  $\varepsilon = 0.10$ (solid). Upper generation limit shown dotted.\label{fig:NewCase}}
	\vspace{\adjustlength}		
\end{figure*}

\begin{table}
	\centering
	\caption{Optimal \pce coefficients and policies for $\varepsilon \in \{ 0.05, 0.10 \}$.
		\label{tab:CaseNew}}
	\small
	\begin{tabular}{ccccc}
		\toprule
		& \multicolumn{2}{c}{Section~\ref{sec:Beta_distribution}---Beta} & \multicolumn{2}{c}{Section~\ref{sec:Sinusoidal_distribution}---Sinusoidal}
		\\
		$\varepsilon$ & $\pvar_0^\star$ 	& 	$\pvar_1^\star$ & $\pvar_0^\star$ 	& 	$\pvar_1^\star$  \\
		\midrule 						
		0.05 & $\begin{bmatrix}
		0.7910 \\ 0.3090
		\end{bmatrix}$ 	&
		$\unaryminus
		\begin{bmatrix}
		0.0127 \\ 0.0873
		\end{bmatrix}
		$
		&
		$\begin{bmatrix}
		0.7813 \\ 0.6187
		\end{bmatrix}$
		&
		$\unaryminus
		\begin{bmatrix}
		0.1919 \\ 0.8081
		\end{bmatrix}
		$
		\\
		0.10
		&
		$\begin{bmatrix}
		0.7890 \\ 0.3110
		\end{bmatrix}$ 	&
		$\unaryminus
		\begin{bmatrix}
		0.0190 \\ 0.0810
		\end{bmatrix}
		$
		&
		$\begin{bmatrix}
		0.7837 \\ 0.6163
		\end{bmatrix}$ 	&
		$\unaryminus
		\begin{bmatrix}
		0.2376\\ 0.7624
		\end{bmatrix}
		$
		\\
		\bottomrule
	\end{tabular}
	\normalsize
\end{table}

The policies~\eqref{eq:Case_beta_policies} are plotted in Figure~\ref{fig:Beta_policies}: generator~1 takes a bigger share in power generation because it is cheaper.
A reduced risk level makes the slope of the policy of generator 1 more horizontal, hence more constraint-averse.
The policies map a specific realization of the demand to a specific realization of the generation; this does not contain any information about \emph{how often} certain control actions are taken.
To obtain this information about frequency of occurrences the \pdfs of the optimal feedback policies have to  be studied; they are plotted in Figures~\ref{fig:Beta_PDF_1} and \ref{fig:Beta_PDF_2}.
They show \emph{how often} certain optimal inputs are applied.
One observes that the chance constraint remains (numerically) non-binding for either choice of $\varepsilon$.
Analytically, the constraint violation probability becomes
\begin{equation}
\label{eq:ClosedFormCDF}
\mathbb{P}(\pvar_1^\star \leq \overline{\pvar}_1) = 1 - F^{(\alpha, \beta)}\left( \frac{1}{6} \left(  \frac{\overline{\pvar}_1^\star - \pvar_{1,0}^\star }{\pvar_{1,1}^\star} + 4 \right) \right),
\end{equation}
where $\pvar_{1,0}^\star$, $\pvar_{1,1}^\star$ are the zero- and first-order \pce coefficient of generator~1, and $F^{(a, b)}$ is the cumulative distribution function of the Beta distribution with shape parameters $a$, $b$.
Inserting the numerical values from Table~\ref{tab:CaseNew} yields
\begin{subequations}
	\begin{align}
	\varepsilon = 0.05: \quad &\mathbb{P}(\rv{\pvar}_1^\star \leq 0.85) = 1 \geq 1 - 0.05, \\
	\varepsilon = 0.10: \quad &\mathbb{P}(\rv{\pvar}_1^\star \leq 0.85) = 1 \geq 1 - 0.10.
	\end{align}
\end{subequations}
This shows that the employed chance constraint reformulation~\eqref{eq:CCFormulation_Example} is conservative for the considered skewed distribution from Figure~\ref{fig:CaseBeta_demand}.
To get the least conservative results, the closed-form~\eqref{eq:ClosedFormCDF} should be used directly in the optimization~\eqref{eq:DC_sOPFExample}, possibly leading to nonconvexities, hence harder problems to solve.
	
\subsubsection{Sinusoidal Distribution}
\label{sec:Sinusoidal_distribution}
Sticking to the aforementioned grid from Figure~\ref{fig:ThreeBus} let us consider \ccopf under Setting~\ref{ass:stochdemandPCE} with the following data:
The separable quadratic cost has parameters $H = \operatorname{diag}(0.2,0.1)$, $h = [0.5, 0.6]^\top$.
The uncertain power demand $\rv{\pfix}$ follows a sinusoidal distribution on $[\unaryminus1.9, \unaryminus 0.9]$ with \pdf $\pdffun_{\rv{\pfix}}(d) = \pi/2\, \sin(\pi (\pfix + 1.9) )$, see Figure~\ref{fig:CaseSinusoidal_demand}.
Choosing the stochastic germ according to the example from Section~\ref{sec:ExactAffinePCE}, $\rv{\pfix}$ admits an exact affine \pce w.r.t. the orthogonal basis $\{ 1, \xi - 1/2 \}$ with \pce coefficients $[\pfix_0, \pfix_1]^\top = [\unaryminus 1.4, 1.0]^\top$, i.e.
\begin{equation}
\label{eq:Case_Sinusoidal_Germ}
\rv{\pfix} = \pfix_0 + \pfix_1 \basisfun_1 = \unaryminus 1.4 + (\xi - 0.5) \quad \Longrightarrow \xi = \rv{\pfix} + 1.9.
\end{equation}
The upper generation limit remains $\overline{\pvar}_1 = 0.85$, as does the chance constraint reformulation~\eqref{eq:CCFormulation_Example}.
However, the chance constraint parameter is chosen as $\beta_u = \Phi^{-1}(1-\varepsilon)$ for risk levels $\varepsilon \in \{ 0.05, 0.10 \}$, where $\Phi^{-1}$ is the inverse of the cumulative distribution function of a standard Gaussian random variable.
The chance constraint reformulation is exact for Gaussian random variables \cite{Calafiore2006}.
It is chosen here because the \pdf of $\rv{\pvar}$ is symmetric and unimodal.
Solving \eqref{eq:DC_sOPFExample} yields the optimal \pce coefficients, listed in Table~\ref{tab:CaseNew}.
The optimal policies are, using \eqref{eq:Case_Sinusoidal_Germ},
	\begin{align}
		\label{eq:Case_sinusoidal_policies}
	\rv{\pvar}^\star(\rv{\pfix})
	= \begin{bmatrix}
	\rv{\pvar}_1(\rv{\pfix}) \\
	\rv{\pvar}_2(\rv{\pfix})
	\end{bmatrix}
	=
	\begin{cases}
	\,
	\begin{bmatrix}
	\phantom{\unaryminus}0.5126 \\
	\unaryminus 0.5126
	\end{bmatrix}
	-
	\begin{bmatrix}
	0.1919 \\
	0.8081
	\end{bmatrix}
	\,
	\rv{\pfix},
	& \varepsilon = 0.05, \\
	\,
	\begin{bmatrix}
	\phantom{\unaryminus} 0.4511 \\
	\unaryminus 0.4511
	\end{bmatrix}
	-
	\begin{bmatrix}
	0.2376 \\
	0.7624
	\end{bmatrix}
	\,
	\rv{\pfix},
	& \varepsilon = 0.10,
	\end{cases}
	\end{align}
which satisfy the power balance, e.g. for $\varepsilon = 0.05$
\begin{equation}
\rv{\pfix} + \bs{1}_2^\top \rv{\pvar}^\star = \rv{\pfix} + \bs{1}_2^\top \left( 
\begin{bmatrix}
\phantom{\unaryminus} 0.5126 \\ \unaryminus 0.5126
\end{bmatrix}
-
\begin{bmatrix}
0.1919 \\ 0.8081
\end{bmatrix}
\rv{\pfix}
  \right)
  = 0.
\end{equation}
The policies~\eqref{eq:Case_sinusoidal_policies} are plotted in Figure~\ref{fig:Sin_policies}:
While the risk level does not have a big influence on the policies, the upper constraint generation $\overline{\pvar}_1$ is clearly violated for generator~1 for high demands ($< \unaryminus 1.7$). 
The \pdfs of the generators are plotted in Figures~\ref{fig:Sin_PDF_1} and \ref{fig:Sin_PDF_2}.
The \pdf of generator~1 confirms that the constraint violation occurs with non-negligible probability.
The empirical constraint violation is given by
\begin{equation}
\mathbb{P}( \rv{\pvar}_1^\star \leq \overline{\pvar}_1) = \frac{1}{2} \left(1 + \cos \left( \pi \left(  \frac{\overline{\pvar}_1 - \pvar_{1,0}^\star}{\pvar_{1,1}^\star} + \frac{1}{2} \right)\right) \right),
\end{equation}
where $\pvar_{1,0}^\star$, $\pvar_{1,1}^\star$ are the zero- and first-order \pce coefficient of generator~1.
Inserting the numerical values from Table~\ref{tab:CaseNew} yields
\begin{subequations}
	\begin{align}
	\varepsilon = 0.05: \quad &\mathbb{P}(\rv{\pvar}_1^\star \leq 0.85) = 0.9511 \geq 1 - 0.05, \\
	\varepsilon = 0.10: \quad &\mathbb{P}(\rv{\pvar}_1^\star \leq 0.85) = 0.9105 \geq 1 - 0.10.
	\end{align}
\end{subequations}
Hence, the employed chance constraint formulation is adequate for this case, because the optimization problem~\eqref{eq:DC_sOPFExample} remains convex, yet it is not overly conservative.

\subsection{300-Bus Example}
\label{sec:Example_300Bus}
\begin{figure*}[ht!]
	\centering
	\includegraphics[]{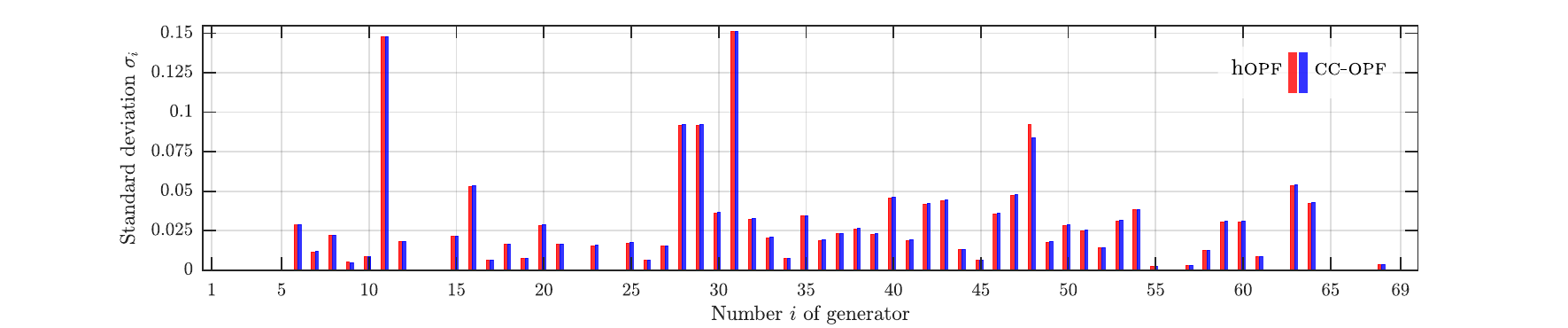}	
	\vspace{-3mm}
	\caption{Standard deviations $\sigma_i$ for generation buses computed by in-hindsight \opf (\hopf) and chance-constrained \opf (\ccopf). \label{fig:300Bus_STD}}
\end{figure*}
This subsection considers the \textsc{ieee} 300-bus test case which has $N = 300$ buses, $N_g = 69$ generation units, and $N_l = 411$ lines.
The results build on the simulation study from~\cite{Muehlpfordt17a}, but the present work considers twice as many sources of uncertainties (20 in the present paper vs. 10 in \cite{Muehlpfordt17a}).

Specifically, we introduce  18 sources of uncertainties for power the demand at buses
\begin{equation}
\begin{split}
\mathcal{N}_u &= \{ 14,    17,    61,    77,   120,   121,   122,   139,   192,   204,   \\ & \qquad \quad 217,   218,   225,   228,   231,   234,   235,   246  \} \subseteq \mathcal{N},
\end{split}
\end{equation}
where buses $i \in \{ 14,    \hdots,   218 \} \subset \mathcal{N}_u$ are modeled as Gaussians, and buses $i \in \{ 225, \hdots, 246 \} \subset \mathcal{N}_u$ are modeled as Beta distributions (Table~\ref{tab:BetaDistribution} lists the shape parameters).
The mean and variance for the Gaussian uncertainties are computed such that the $\pm 3 \sigma$-interval contains $\pm 15\,\%$ deviations from the nominal value.
Similarly, the support of the Beta distributions is the interval of $\pm 15\,\%$ deviations from the nominal value.

Additionally, there is one source of uncertainty for wind power (modeled as a Gaussian), and one source of uncertainty for solar power (modeled as a Beta distribution with shape parameters (7,7)), yielding a total of $\nxi = 20$ sources of uncertainties.
Solar and wind power are modeled to affect all buses.
The constraint reformulation parameters $\beta_{\pvar}$ and $\beta_{l}$ are chosen as $\beta_{\pvar} = \beta_{l} = \sqrt{(1 - \varepsilon)/\varepsilon}$ with violation probability~$\varepsilon = 2.5\, \%$.
This choice ensures distributionally robust satisfaction of the chance constraints~\cite{Calafiore2006}.

The optimal \pce coefficients are the solution to the~\socp~\eqref{eq:DC_OPF_SOCP}, from which optimal policies are recovered.\footnote{\till{The \socp is solved in 300\,ms with Matlab and Gurobi.}}
To assess the quality of the optimized policies via \ccopf we compare them to the policies from the most-informative case, namely in-hindsight \opf (\hopf) \cite{Muehlpfordt18b}; in-hindsight policies are obtained by sampling the uncertainties, and then solving a deterministic \opf problem for every sample.\footnote{\till{We selected 20\,000 samples. Solving the 20\,000 deterministic \opf problems took about 12.5\,min (without any parallelization).}}
The policy from \hopf provides the best distribution of optimal inputs and satisfies the constraints strictly for every sample.

For the chance constraint reformulations to be exact, it is important that \pce computes the moments accurately.
Let us assess the quality of the standard deviation---which is related to the second moment---as it acts as an uncertainty margin for the chance constraint reformulations, see~\eqref{eq:DC_OPF_SOCP}.
Figure~\ref{fig:300Bus_STD} shows two values for the standard deviations, for all generation buses; one is computed by \hopf, the other is computed by \ccopf.
As can be seen from Figure~\ref{fig:300Bus_STD} the numerical values are close.
In terms of the 1-norm of the vectors of standard deviations, the power fluctuation from \ccopf is marginally bigger, namely $\|\std{\ccopf}{} \|_{1} = 1.7035 = \| \std{\hopf}{}\|_1  + 0.0030$.
A closer look at Figure~\ref{fig:300Bus_STD} reveals that the standard deviation from \ccopf appears to be slightly larger for all buses except for bus~48, where it is significantly lower, $\| \std{\hopf}{} - \std{\ccopf}{} \|_{\infty}  = | \std{\hopf}{48} - \std{\ccopf}{48} | = 0.0090$.
The generator at bus~48 is connected to line~394 for which the line flow limit becomes binding.
Compared to the \hopf solution, \ccopf is more conservative and reduces the variability of the power injections at bus~48.

To conclude, \ccopf finds by means of a single optimization problem optimal policies that satisfy the power balance strictly and that satisfy the inequalities in a chance constraint sense.
The statistics of the policies from \ccopf are consistent with the results from the best-informed \hopf solution, yet obtained at lower computation times.

\begin{table}
	\centering
	\caption{Shape parameters of Beta distributions for 300-bus example.\label{tab:BetaDistribution}}
	\small
	\begin{tabular}{p{1.5cm}cccccc}
		\toprule
		Bus &	225 & 228 & 231 & 234 & 235 & 246 \\
		Parameters & (8,3) & (3,8) & (8,3) & (3,8) & (7,7) & (3,8)\\
		\bottomrule
	\end{tabular}
\end{table}

\section{Conclusions and Outlook}
\label{sec:Conclusions}
Given the continuously increasing share of renewables, the structured consideration of uncertainties for optimal power flow problems is paramount.
Existing approaches to chance-constrained \opf under \dc conditions often employ affine feedback policies to account for fluctuations modeled via multivariate Gaussian uncertainties.
Starting from a chance-constrained optimal power flow problem written in terms of random variables, this paper shows that the importance of affine policies is not related to the uncertainty model, e.g. Gaussian, but to the power balance constraint that maps random variables to random variables.
In addition, the optimal affine policies are random variables whose realizations satisfy the power balance, and satisfy the inequality constraints in the chance constraint sense.
The present paper proposes a three-step methodology to solving \ccopf problems, namely formulation, parameterization, optimization.
When formulating \ccopf problems, it is argued that the choice of the cost function and chance constraint formulation are modeling choices, hence user-specific.
In any case, an affine parameterization of the optimal policy is required due to power balance.
The resulting parameterized optimization problem can be solved tractably and efficiently using polynomial chaos expansion, which results for example in a second-order cone program that scales well with the number of uncertainties.
The proposed methodology is applied to a tutorial 3-bus example both for a standard and non-standard uncertainty model, and to the \textsc{ieee} 300-bus system.

The present paper has not investigated the relation between different problem formulations whatsoever.
For example, what can be said about the minimizers if just the cost function formulation is changed?
What is the ``most meaningful'' way to reformulate the cost and the inequalities in the presence of certain inequalities?
To find an acceptable trade-off between conservatism and computational complexity is a topic worth pursuing.

The presented results hold for transmission networks under \dc power flow assumptions.
It is natural to ask whether the method can be extended to the more general case of \ac power flow.
In fact, \cite{Muehlpfordt16b,Engelmann18} studied \ac-\opf under uncertainty using polynomial chaos and showed that \pce provides policies of higher order than affine policies that satisfy the \ac power flow equations \emph{numerically}; however no rigorous proof is provided.
It remains an open research question what kinds of policies generally satisfy \ac power flow.

Finally, in applications the data-driven computation of the \pce of the uncertain demand/feed-in may itself be challenging and subject to uncertainty about the uncertainty.
Thus, distributionally robust \pce formulations are of interest in the future.

\section*{Acknowledgment}
All authors would like to express their gratitude towards Lutz Gr\"oll from the Institute for Automation and Applied Informatics at Karlsruhe Institute of Technology.
The numerous insightful discussions with him helped to improve the manuscript.

\section*{References}
\bibliographystyle{elsarticle-num} 
\def\bibfont{\footnotesize}
\bibliography{PCE_DCOPF_lit}
\normalsize


\end{document}